\theoremstyle{theorem}
\newtheorem{theorem}{Theorem}[section]
\newtheorem{proposition}[theorem]{Proposition}
\newtheorem{lemma}[theorem]{Lemma}
\newtheorem{corollary}[theorem]{Corollary}
\newtheorem*{rep@theorem}{\rep@title}
\newcommand{\newreptheorem}[2]{%
\newenvironment{rep#1}[1]{%
 \def\rep@title{#2 \ref{##1}}%
 \begin{rep@theorem}}%
 {\end{rep@theorem}}}
\theoremstyle{definition}
\newtheorem{definition}[theorem]{Definition}
\newtheorem{remark}[theorem]{Remark}
\newlength{\dhatheight}
\author{Ryan Blair, Alexandra Kjuchukova and Ella Pfaff}
\begin{document}

\rhead{\thepage}
\lhead{\author}
\thispagestyle{empty}

%\tableofcontents
%\listoffigures

\raggedbottom
\pagenumbering{arabic}
\setcounter{section}{0}

%%%%%%%%%%%%%%%%%%%%%%%%%%%%%%%%%%%%%%%%%%%%%%%%%%%%%%%%
%%%%%%%%%%%%%%%%%%%%%%%%%%%%%%%%%%%%%%%%%%%%%%%%%%%%%%%%
%%%%%%%%%%%%%%%%%%%%%%%%%%%%%%%%%%%%%%%%%%%%%%%%%%%%%%%%

\title[Equality between bridge number and meridional rank] 
{Adding a suitable unknot to any link \\equates bridge number and meridional rank}

% \date{\today}
%\keywords{meridional rank, bridge number, Wirtinger number, awesomeness}
%\subjclass[2010]{57M12,57M25}

\maketitle

%%%%%%%%%%%%%%%%%%%%%%%%%%%%%%%%%%%%%%%%%%%%%%%%%%%%%%%

%%%%%%%%%%%%%%%%%%%%%%%%%%%%%%%%%%%%%%%%%%%%%%%%%%%%%%%
\begin{abstract}
Given any link $L\subseteq S^3$, we show that it is possible to embed an unknot $U$ in its complement so that the link $L\cup U$ satisfies the Meridional Rank Conjecture (MRC). The bridge numbers in our construction fit into the equality $\beta(L\cup U)=2\beta(L)-1=\text{rank}(\pi_1(S^3\backslash (L\cup U)))$. In addition, we prove the MRC for new infinite families of links and distinguish them from previously settled cases through an application of bridge distance. 
	\end{abstract}
%%%%%%%%%%%%%%%%%%%%%%%%%%%%%%%%%%%%%%%%%%%%%%%%%%%%%%%

\section{Introduction}

Given a link $L\subset S^3$, we denote its bridge number and meridional rank by $\beta(L)$ and $\mu(L)$, respectively. The meridional rank conjecture, or MRC, due to Cappell and Shaneson~\cite[Problem 1.18]{kirby1995problems}, asks whether the equality  $\beta(L)=\mu(L)$ holds for every link in $S^3$.
Over the last four decades, the conjecture has been established for many links satisfying a variety of geometric conditions~\cite{boileau1985nombre, rost1987meridional, burde1988links, boileau1989orbifold, boileau2017meridionalrank, CH14, Corn14, baader2019coxeter, baader2017symmetric, baader2023bridge, Dutra22}. The analogous statement is also shown to hold for some knotted spheres in $S^4$~\cite{joseph2024meridional}. One shared characteristic of the above diverse families is that all knots among them have  low distance relative to their bridge number; specifically, for any knot covered by the results mentioned above, the ratio of bridge distance over bridge number is less than 3. In Proposition~\ref{thm:fishnet-quotients} and in Section~\ref{sec:other-good-fishnets}, we give new families of links which satisfy MRC and which contain knots for which the bridge number and the ratio of bridge distance to bridge number can be arbitrarily high. 
The links we examine in this paper, which we call {fishnets} (see Definition~\ref{def:fishnet} and Figure~\ref{Fig:JohnsonMoriah}),  can also be used to prove the following.

\begin{theorem} \label{thm:sublink}  For any link $L\subseteq S^3$ in bridge position with $b>1$ bridges, there is an unknot  $U\subseteq S^3\backslash L$ such that 
\begin{equation}\label{eq:main}
    \beta(L\cup U)=\mu(L\cup U)=2b-1=\text{rank}(\pi_1(S^3\backslash (L\cup U))).
\end{equation}
\end{theorem}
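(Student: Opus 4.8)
The plan is to establish Equation~\eqref{eq:main} by proving three separate inequalities and then chaining them into a cyclic chain of equalities. Recall that for any link one always has $\mu(L\cup U)\le\beta(L\cup U)$, since a bridge presentation with $k$ bridges exhibits $\pi_1$ of the complement as generated by $k$ meridians. Likewise $\text{rank}(\pi_1(S^3\setminus(L\cup U)))\le\mu(L\cup U)$ is immediate, because every meridional generating set is in particular a generating set. So the two ``easy'' directions give

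\begin{equation}\label{eq:easy}
  \text{rank}(\pi_1(S^3\setminus(L\cup U)))\le\mu(L\cup U)\le\beta(L\cup U).
\end{equation}

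Thus it suffices to (i)~construct a specific unknot $U$ so that the fishnet link $L\cup U$ has an explicit $(2b-1)$-bridge presentation, giving $\beta(L\cup U)\le 2b-1$; and (ii)~produce a matching \emph{lower bound} $\text{rank}(\pi_1(S^3\setminus(L\cup U)))\ge 2b-1$. Combining (i) and (ii) with \eqref{eq:easy} forces every term to equal $2b-1$ and collapses the chain into the desired string of equalities.

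For step (i), I would start from the given bridge position of $L$ with $b$ bridges, which decomposes $S^3$ along a bridge sphere into $b$ trivial arcs on each side. The idea is to thread the unknot $U$ through the bridge sphere so that it ``nets together'' the strands in the manner of the fishnet construction of Definition~\ref{def:fishnet}; one should be able to arrange $U$ to contribute a controlled number of new bridges. The arithmetic $2b-1$ (rather than $2b$) signals that $U$ is positioned so that one of its would-be bridges can be amalgamated with an existing bridge of $L$, or equivalently that $U$ meets the bridge sphere in a way that saves a single maximum. I would draw $U$ explicitly in the bridge diagram, isotope it to put $L\cup U$ in a Morse position with respect to the height function, and \emph{count maxima} to verify the upper bound $\beta(L\cup U)\le 2b-1$. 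Verifying that $U$ is genuinely unknotted and genuinely embedded in the complement of $L$ (not linked into $L$ in a way that changes $L$) is routine but must be checked.

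The main obstacle is step (ii), the lower bound on meridional rank, since rank of a link group is notoriously hard to bound from below. Here I expect the argument to route through the structure of the fishnet link rather than attack $\pi_1$ directly. The natural tool is a \emph{coloring} or representation obstruction: one exhibits a surjection from $\pi_1(S^3\setminus(L\cup U))$ onto a target group $G$ (for instance a suitable symmetric or Coxeter-type group, as in the cited work of Baader and collaborators) in which the images of meridians satisfy a relation forcing any generating set of meridians to have at least $2b-1$ elements. Concretely, I would look for a homomorphism to a group generated by the meridian images such that no fewer than $2b-1$ of these images can generate the image; a rank count in an appropriate abelian or metabelian quotient, or a count of ``colors'' needed in a Fox-type coloring adapted to the net structure, should produce the bound $2b-1$. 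The delicate point is matching the combinatorics of the fishnet to the target group so that the lower bound lands exactly on $2b-1$ and not lower; this is where the specific geometry of $U$ relative to the $b$ bridges of $L$ must be exploited, and it is the step I would expect to consume most of the effort.
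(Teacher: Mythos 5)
Your overall architecture coincides with the paper's: the easy chain $\operatorname{rank}(\pi_1(S^3\backslash(L\cup U)))\leq\mu(L\cup U)\leq\beta(L\cup U)$, a diagrammatic construction giving $\beta(L\cup U)\leq 2b-1$, and a quotient-group obstruction for the lower bound. Your step (i) is close in spirit to what the paper does, though the count $2b-1$ does not arise from amalgamating a bridge of $U$ with a bridge of $L$: the paper first converts the given diagram of $L$ into a loose fishnet of width $2b$, then interweaves an unknot $U$ that is itself in $(b-1)$-bridge position between the $b$ bridges of $L$, so the total is $b+(b-1)=2b-1$ maxima (this is also where the hypothesis $b>1$ enters). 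For the meridional-rank bound your instinct is exactly right: choosing every $L$-to-$U$ twist region to have $\pm 3$ crossings makes $L\cup U$ satisfy the gcd hypothesis of Proposition~\ref{thm:fishnet-quotients} with $d_{2j}=3$, yielding a Coxeter quotient of Coxeter rank $2b-1$, hence $\mu(L\cup U)\geq 2b-1$.

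The genuine gap is in your step (ii) as it concerns the \emph{ordinary} rank, and your proposed fallback tools would fail there. A meridional-rank lower bound only requires that the quotient cannot be generated by fewer than $2b-1$ \emph{reflections}; the rank lower bound requires a quotient that cannot be generated by fewer than $2b-1$ elements of \emph{any} kind. Your suggestions cannot deliver this: the abelianization of $\pi_1(S^3\backslash(L\cup U))$ is free abelian of rank equal to the number of components (e.g.\ $2$ when $L$ is a knot), Fox-type colorings land in dihedral groups of rank $2$, and the symmetric group --- which the paper itself exhibits as a further quotient of its Coxeter group $G_{2b}$, with meridians mapping to transpositions --- is $2$-generated, so it certifies $\mu\geq 2b-1$ but only $\operatorname{rank}\geq 2$. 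The paper closes exactly this gap with a specific, non-elementary input: it passes to the further Coxeter quotient $G_{2b}'$ in which $(a_ia_j)^3=1$ for all $i\neq j$ and invokes Lemma~3.2(c) of~\cite{kaufmann1992rank} to conclude that $G_{2b}'$ has ordinary rank $2b-1$; since ranks do not increase under quotients, this forces $\operatorname{rank}(\pi_1(S^3\backslash(L\cup U)))\geq 2b-1$. Without identifying some such fact --- a quotient whose minimal number of generators, not merely of reflections, equals $2b-1$ --- the final equality in Equation~\eqref{eq:main} remains unproved.
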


Given a link $L$ in bridge position with $b$ bridges, we describe an embedding of an unknot $U$ in the complement of $L$ so that Equation~\ref{eq:main} holds. In our construction, a minimal generating set of meridians for $L\cup U$ contains exactly $b$ meridians of $L$. Moreover, it is in fact a minimal generating set for the group, without restricting the conjugacy classes of generators. Note that we allow $b$ to exceed $\beta(L)$, in which case the number of meridians of $L$ needed in a generating set for $L\cup U$ exceeds $\beta(L)$.  

The proof of Theorem~\ref{thm:sublink} relies on finding maximal rank Coxeter quotients, in the sense studied in~\cite{baader2019coxeter, baader2023bridge, blair2024coxeter}, of fundamental groups of fishnet link complements. A map from the group of a link $L$ onto a Coxeter group $G(\Gamma)$ is maximal rank if it maps meridians to reflections and the Coxeter rank of $G(\Gamma)$ is equal to (any upper bound on) the bridge number of $L$. We define fishnet links as plat closures of braids which admit diagrams as in Figure~\ref{Fig:JohnsonMoriah}. The bridge distance of what we call strong fishnet links was previously studied by Johnson and Moriah~\cite{johnson2016bridge}, and incompressible surfaces in the complement of fishnets was investigated by Finkelstein and Moriah~\cite{finkelstein1999closed, FM99}. 
By definition, a fishnet link $L$ is determined by a set of integer parameters corresponding to the powers of the standard braid generators in a fishnet diagram of $L$. In Proposition~\ref{thm:fishnet-quotients}, we give sufficient conditions, in terms of this parameter set, for a fishnet link $L$ to admit a maximal rank Coxeter quotient. In particular, we find infinite families of fishnets which satisfy MRC, with the properties discussed above.

\begin{definition}\label{def:fishnet} Given an integer valued vector 
\[
    \mathfrak{t}:= (t_{1,2},t_{1,4}, \dots, t_{1,m-2};t_{2,1},t_{2,3},\dots,t_{2,m-1}; \dots;t_{n,2},t_{n,4}, \dots, t_{n,m-2}),
    \] 
    let $L_\mathfrak{t}$ denote the plat closure of the $m$-stranded braid 
\[
\zeta_\mathfrak{t} := (\sigma_2^{t_{1,2}}\sigma_4^{t_{1,4}}\dots \sigma_{m-2}^{t_{1,m-2}})(\sigma_1^{t_{2,1}}\sigma_3^{t_{2,3}}\dots \sigma_{m-1}^{t_{2,m-1}})(\sigma_2^{t_{3,2}}\sigma_4^{t_{3,4}}\dots \sigma_{m-2}^{t_{3,m-2}}) \dots (\sigma_2^{t_{n,2}}\sigma_4^{t_{n,4}}\dots \sigma_{m-2}^{t_{n,m-2}}).
\]
(See Figure~\ref{Fig:JohnsonMoriah}.) 
    If all the integers $t_{i, j}$ are non-zero, we say $L$ is a {\it regular fishnet link}, or simply a {\it fishnet}, of width $m$ and height $n$. When $|t_{i, j}|\geq3$ for all $i, j,$ we say $L$ is a {\it strong fishnet}. If some of the parameters $t_{i, j}$ are 0, we say $L$ is a {\it loose fishnet}.
    \end{definition}

\begin{remark}
    By allowing sufficiently many $t_{i, j}$ to be zero, we can present any link $L$ as a loose fishnet with width equal to $2\beta(L)$. In other words, one can regard the symmetry imposed in fishnet diagrams as merely a convenient organizational principle applicable to all links.
\end{remark}
 
We will refer to the collection of twist regions whose parameters are of the form $t_{i, \ast}$ as {\it row $i$} of the fishnet; and to the the twist regions whose parameters are of the form $t_{\ast, j}$ as {\it column $j$}. Note that the row indices in each column have the same parity; that the integer $m$ is always even while $n$ is always odd; and that a fishnet diagram is by definition in bridge position with $\frac{m}{2}$ bridges. In what follows, it will be helpful to name the greatest common divisors of parameters which share a column, so we
let $d_1, d_2, \dots, d_{m{-}1}$ be the positive integers defined as follows:
\begin{equation}\label{eq:d_js}
    d_j := \gcd_{\substack{1\leq i\leq n_j\\ i\equiv j+1\mod 2}}
%\gcd_{1\leq i\leq n_j,\\ i\equiv j+1\mod 2}
\left\{t_{i,j}\right\},
\end{equation} 
where $n_j=n-1$ if $j$ is odd and $n_j=n$ if $j$ is even. If for some $j=k$ we have that all parameters $t_{i,k}$ in column $k$ are equal to zero, we use the convention that $d_k=\infty$.

In the next theorem we use Coxeter quotients of link groups to derive sufficient conditions on the parameter vector $\mathfrak{t}$  which guarantee that the meridional rank of $L_\mathfrak{t}$ is also equal to $\frac{m}{2}$.  As remarked above, these fishnets provide the first evidence that links which satisfy MRC are not limited to links which have bounded bridge number, bounded bridge distance, or bounded ratio of bridge distance over bridge number. See Theorem~\ref{thm:low-d} for a discussion of these complexity measures for previously known cases.

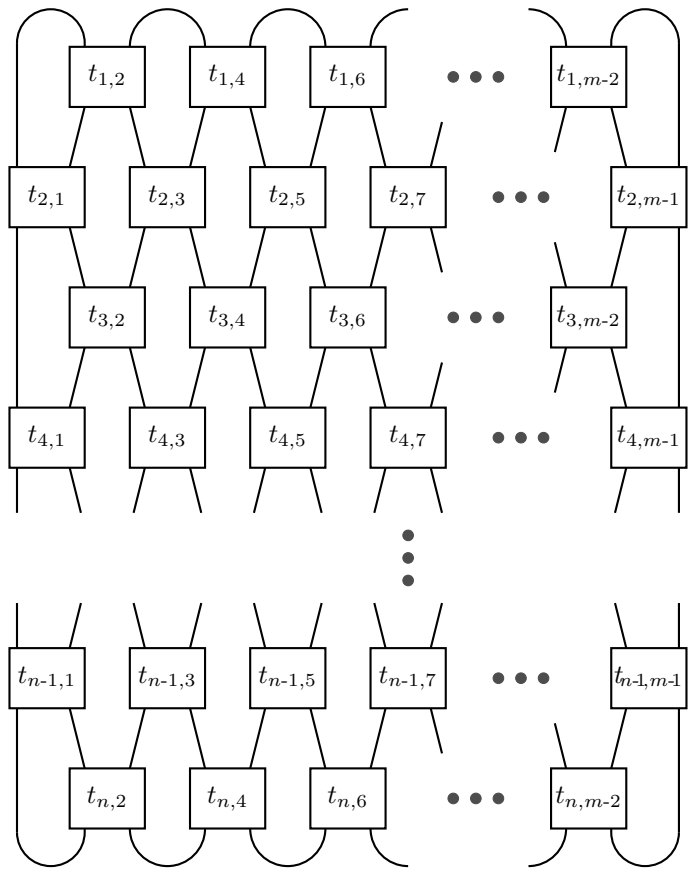
\begin{figure}[ht]
\centering

\begin{tikzpicture}
        [x=1mm,y=1mm,
        box/.style={rectangle, inner sep=0mm, draw=black, fill=white, thick, minimum width=10mm, minimum height=8mm}]

        %boxes n' labels
        \node[box](tn2) at (0,0){$t_{n,2}$};
        \node[box](tn4) at (16,0){$t_{n,4}$};
        \node[box](tn6) at (32,0){$t_{n,6}$};
        \node[box](tnm2) at (64,0){$t_{n,m\text-2}$};

        \node[box](tn11) at (-8,16){$t_{n\text-1,1}$};
        \node[box](tn13) at (8,16){$t_{n\text-1,3}$};
        \node[box](tn15) at (24,16){$t_{n\text-1,5}$};
        \node[box](tn17) at (40,16){$t_{n\text-1,7}$};
        \node[box](tn1m1) at (72,16){$t\!_{n\text-\! 1\!,m\text-\!1}$};

        \node[box](t41) at (-8,48){$t_{4,1}$};
        \node[box](t43) at (8,48){$t_{4,3}$};
        \node[box](t45) at (24,48){$t_{4,5}$};
        \node[box](t47) at (40,48){$t_{4,7}$};
        \node[box](t4m1) at (72,48){$t_{4,m\text-1}$};

        \node[box](t32) at (0,64){$t_{3,2}$};
        \node[box](t34) at (16,64){$t_{3,4}$};
        \node[box](t36) at (32,64){$t_{3,6}$};
        \node[box](t3m2) at (64,64){$t_{3,m\text-2}$};

        \node[box](t21) at (-8,80){$t_{2,1}$};
        \node[box](t23) at (8,80){$t_{2,3}$};
        \node[box](t25) at (24,80){$t_{2,5}$};
        \node[box](t27) at (40,80){$t_{2,7}$};
        \node[box](t2m1) at (72,80){$t_{2,m\text-1}$};

        \node[box](t12) at (0,96){$t_{1,2}$};
        \node[box](t14) at (16,96){$t_{1,4}$};
        \node[box](t16) at (32,96){$t_{1,6}$};
        \node[box](t1m2) at (64,96){$t_{1,m\text-2}$};
        
        %lines
        \draw[thick] 
        (-12,-4.5) -- (-12,12)
        (-3,4) -- (-5,12) 
        (3,4) -- (5,12) 
        (13,4) -- (11,12) 
        (19,4) -- (21,12) 
        (29,4) -- (27,12) 
        (35,4) -- (37,12) 
        (44.5,6) -- (43,12)%(45,4) -- (43,12)
        (61,4) -- (59.5,10)%(61,4) -- (59,12)
        (67,4) -- (69,12)
        (76,-4.5) -- (76,12);
        \draw[thick] 
        (-12,20) -- (-12,26)%
        (-5,20) -- (-3.5,26)%
        (5,20) -- (3.5,26)%
        (11,20) -- (12.5,26)% 
        (21,20) -- (19.5,26)%
        (27,20) -- (28.5,26)%
        (37,20) -- (35.5,26)%
        (43,20) -- (44.5,26)%
        (69,20) -- (67.5,26)%
        (76,20) -- (76,26);%
        \draw[thick] 
        (-12,38) -- (-12,44)%
        (-3.5,38) -- (-5,44)%
        (3.5,38) -- (5,44)%
        (12.5,38) -- (11,44)%
        (19.5,38) -- (21,44)%
        (28.5,38) -- (27,44)%
        (35.5,38) -- (37,44)%
        (44.5,38) -- (43,44)%
        (67.5,38) -- (69,44)%
        (76,38) -- (76,44);%
        \draw[thick] 
        (-12,52) -- (-12,76)
        (-5,52) -- (-3,60) 
        (5,52) -- (3,60) 
        (11,52) -- (13,60) 
        (21,52) -- (19,60) 
        (27,52) -- (29,60) 
        (37,52) -- (35,60) 
        (43,52) -- (44.5,58)%(43,52) -- (45,60)
        (59.5,54) -- (61,60)%(59,52) -- (61,60)
        (69,52) -- (67,60)
        (76,52) -- (76,76);
        \draw[thick] 
        (-3,68) -- (-5,76) 
        (3,68) -- (5,76) 
        (13,68) -- (11,76) 
        (19,68) -- (21,76) 
        (29,68) -- (27,76) 
        (35,68) -- (37,76)
        (44.5,70) -- (43,76)%(45,68) -- (43,76)
        (61,68) -- (59.5,74)%(61,68) -- (59,76)
        (67,68) -- (69,76);
        \draw[thick] 
        (-12,84) -- (-12,100.5)
        (-5,84) -- (-3,92) 
        (5,84) -- (3,92) 
        (11,84) -- (13,92) 
        (21,84) -- (19,92) 
        (27,84) -- (29,92) 
        (37,84) -- (35,92) 
        (43,84) -- (44.5,90)%(43,84) -- (45,92)
        (59.5,86) -- (61,92)%(59,84) -- (61,92)
        (69,84) -- (67,92)
        (76,84) -- (76,100.5);

        %circles
        \draw[thick]
        (-3,-4) -- (-3,-4.5)

        (-12,-4.5) arc (180:360:4.5)
        (3,-4) arc (180:360:5)
        (19,-4) arc (180:360:5)
        (35,-4) arc (180:270:5)
        (61,-4) arc (0:-90:5)
        (67,-4.5) arc (180:360:4.5)

        (67,-4) -- (67,-4.5);

        \draw[thick] 
        (-3,100) -- (-3,100.5)

        (-12,100.5) arc (180:0:4.5)
        (3,100) arc (180:0:5)
        (19,100) arc (180:0:5)
        (35,100) arc (180:90:5)
        (61,100) arc (0:90:5)
        (67,100.5) arc (180:0:4.5)

        (67,100) -- (67,100.5);
        
        %dots dots dots
        \filldraw[black!70] 
        (49,0) circle (2pt)
        (46,0) circle (2pt)
        (52,0) circle (2pt)

        (55,16) circle (2pt)
        (52,16) circle (2pt)
        (58,16) circle (2pt)
        
        (40,32) circle (2pt)
        (40,29) circle (2pt)
        (40,35) circle (2pt)
        
        (55,48) circle (2pt)
        (52,48) circle (2pt)
        (58,48) circle (2pt)
        
        (49,64) circle (2pt)
        (46,64) circle (2pt)
        (52,64) circle (2pt)

        (55,80) circle (2pt)
        (52,80) circle (2pt)
        (58,80) circle (2pt)
        
        (49,96) circle (2pt)
        (46,96) circle (2pt)
        (52,96) circle (2pt);     
    \end{tikzpicture}

\caption{A fishnet link with width $m$ and height $n$, determined by the parameter vector $\mathfrak{t}:= (t_{1,2},t_{1,4}, \dots, t_{1,m-2};t_{2,1},t_{2,3},\dots,t_{2,m-1}; \dots;t_{n,2},t_{n,4}, \dots, t_{n,m-2})$.}
\label{Fig:JohnsonMoriah}
\end{figure}

\begin{proposition}\label{thm:fishnet-quotients}
    Let $L_\mathfrak{t}$ denote the regular fishnet link determined by the parameter vector 
    \[
    \mathfrak{t}:= (t_{1,2},t_{1,4}, \dots, t_{1,m-2};t_{2,1},t_{2,3},\dots,t_{2,m-1}; \dots;t_{n,2},t_{n,4}, \dots, t_{n,m-2});
    \] 
    and let $d_1, d_2, \dots, d_{m{-}1}$ be as defined in Equation~\ref{eq:d_js}. 
If  
\[
d_{2j}>1\text{ for all } 1\leq j\leq \frac{m-2}{2}, 
\]
then $\mu(L)=\beta(L)= \frac{m}{2}$. In particular, the MRC holds for $L$. Moreover, if $L$ is a strong fishnet (that is, $|t_{i,j}|\geq 3, \forall i, j$) and $m\geq 6,$ then $L$ has bridge distance  $\lceil n /(m - 4))  \rceil$. 
\end{proposition}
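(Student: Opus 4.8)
The plan is to treat the two assertions separately. The equality $\mu(L_\mathfrak{t})=\beta(L_\mathfrak{t})=\tfrac m2$ I would prove by exhibiting a maximal rank Coxeter quotient of the link group, whereas the bridge distance computation is obtained by specializing the results of Johnson and Moriah~\cite{johnson2016bridge} on highly twisted plats. For the first assertion, note that the fishnet diagram of Figure~\ref{Fig:JohnsonMoriah} is a plat on $m$ strands, hence presents $L_\mathfrak{t}$ in bridge position with $\tfrac m2$ bridges; this yields the upper bound $\mu(L_\mathfrak t)\le\beta(L_\mathfrak t)\le\tfrac m2$. It therefore suffices to prove the matching lower bound $\mu(L_\mathfrak t)\ge\tfrac m2$. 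Following~\cite{baader2019coxeter, baader2023bridge, blair2024coxeter}, I would obtain this from a surjection $\phi\colon\pi_1(S^3\setminus L_\mathfrak t)\twoheadrightarrow G(\Gamma)$ onto a Coxeter group of rank $\tfrac m2$ that sends every meridian to a reflection: since a rank-$r$ Coxeter group cannot be generated by fewer than $r$ reflections, the $\phi$-image of a minimal meridional generating set is a set of reflections generating $G(\Gamma)$, forcing $\mu(L_\mathfrak t)\ge\tfrac m2$.

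To build $\phi$, let $\Gamma$ be the path graph on vertices $r_1,\dots,r_{m/2}$, one per bridge, and assign to the edge $\{r_j,r_{j+1}\}$ a Coxeter label $m_{j,j+1}$ equal to some divisor $\ge 2$ of $d_{2j}$ (see Equation~\ref{eq:d_js}); this is possible precisely because the hypothesis guarantees $d_{2j}>1$ for every $j$. I would define $\phi$ by the coloring that assigns $r_i$ to the two strands capped off by the $i$-th top bridge and then propagate it downward through $\zeta_\mathfrak t$. The key local computation is that a twist region on two strands carrying distinct reflections $a=r_j$, $b=r_{j+1}$ cycles their colors through the $m_{j,j+1}$ reflections of the dihedral group $\langle a,b\rangle$ and returns them to $(a,b)$ exactly when the number of crossings is a multiple of $m_{j,j+1}$; since $m_{j,j+1}\mid d_{2j}\mid t_{i,2j}$ for each entry of column $2j$, every even-column (odd-row) twist region restores the standard coloring $r_1,r_1,r_2,r_2,\dots,r_{m/2},r_{m/2}$. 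The intervening odd-column (even-row) twist regions act on pairs of strands carrying a common reflection $r_i$, and because $r_i^2=1$ such a twist fixes the coloring regardless of its parameter. Because the coloring therefore returns to standard form after every row, it agrees with the plat caps (which identify strands $2i{-}1$ and $2i$) at both the top and the bottom of the diagram, so $\phi$ descends to $\pi_1(S^3\setminus L_\mathfrak t)$.

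The main obstacle is exactly this global well-definedness check: that every Wirtinger relation and every plat cap is respected simultaneously. It reduces to the single dihedral calculation above, carried out with care for the sign of each $t_{i,j}$ (a negative parameter conjugates by the inverse rotation, which is again trivial once $m_{j,j+1}$ divides $t_{i,j}$) and for the strand permutation induced by each row. Surjectivity of $\phi$ is immediate, as $r_1,\dots,r_{m/2}$ are the standard generators of $G(\Gamma)$. The only genuinely external input is the statement that a rank-$r$ Coxeter group requires at least $r$ reflections to generate, which I would invoke in the form already used in~\cite{baader2019coxeter, blair2024coxeter}; it applies to $G(\Gamma)$ for any labels $m_{j,j+1}\ge 2$, including the commuting case $m_{j,j+1}=2$ (where $G(\Gamma)$ surjects onto $(\Z/2)^{m/2}$). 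Combining the two bounds gives $\mu(L_\mathfrak t)=\beta(L_\mathfrak t)=\tfrac m2$, hence MRC for $L_\mathfrak t$.

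For the final assertion, I would observe that a strong fishnet is precisely a highly twisted plat in the sense of Johnson and Moriah~\cite{johnson2016bridge}: it is a plat on $m$ strands, its $\tfrac m2\ge 3$ bridges satisfy $m\ge 6$, and every twist region has $|t_{i,j}|\ge 3$ crossings. Their distance theorem then computes the distance of the plat bridge sphere as $\lceil n/(2\cdot\tfrac m2-4)\rceil=\lceil n/(m-4)\rceil$, where the height $n$ plays the role of their number of twist rows and the denominator $2(\tfrac m2)-4=m-4$ is their bridge-number correction, positive exactly when $m\ge6$. The point requiring care is the dictionary: one must confirm that the fishnet template of Figure~\ref{Fig:JohnsonMoriah} matches their plat model row for row, so that no twist region is degenerate and their hypotheses hold verbatim. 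Since a distance exceeding $2$ forces the plat sphere to be the unique minimal bridge sphere, this value is an invariant of $L_\mathfrak t$, which justifies calling it the bridge distance of $L_\mathfrak t$.
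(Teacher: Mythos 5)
Your proposal is correct and takes essentially the same route as the paper: a Fox-style coloring of the fishnet diagram by a rank-$\frac{m}{2}$ Coxeter group on the path graph with edge labels dividing the column gcds $d_{2j}$ (the paper simply takes the labels to be $d_{2j}$ themselves, citing~\cite{felikson2010reflection} for the rank statement), with the divisibility $d_{2j}\mid t_{i,2j}$ making the coloring return to standard form after each row so that the plat caps are respected, and with Theorem~\ref{thm:JohnsonMoriah} of Johnson--Moriah supplying the distance of the induced bridge sphere. The one inaccuracy is your closing aside that distance exceeding $2$ forces uniqueness of the minimal bridge sphere --- Tomova's bound requires distance exceeding roughly twice the bridge number, not $2$ --- but this remark is extraneous to the statement, and the paper instead invokes~\cite{To07} only to ensure that $d(L)$, defined as a maximum over bridge spheres realizing $\beta(L)$, is well defined when $\beta(L)\geq 3$.
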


\begin{corollary}\label{cor:knots-too}
    For any $b\in \mathbb{N}_{\geq 3}$ and $r\in \mathbb{R}$, there exists a knot $K$ with $\beta(K)=\mu(K)=b$ such that $\frac{d}{b}> r,$ where $d$ is the bridge distance of $K.$
\end{corollary}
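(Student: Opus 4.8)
The plan is to produce $K$ as a strong fishnet knot $L_{\mathfrak t}$ of width $m=2b$ and large odd height $n$, chosen so that Proposition~\ref{thm:fishnet-quotients} applies and so that the plat closure has a single component. Taking $m=2b$ makes the diagram's bridge count $m/2=b$, and since $b\ge 3$ we have $m\ge 6$ and $m-4=2b-4\ge 2>0$, so the strong-fishnet bridge-distance formula of Proposition~\ref{thm:fishnet-quotients} is available and yields $d=\lceil n/(m-4)\rceil=\lceil n/(2b-4)\rceil$. As $n$ runs through large odd integers this grows without bound, so I will fix $n$ with $\lceil n/(2b-4)\rceil > rb$; this already forces $d/b>r$, and it remains only to choose the parameters so that $L_{\mathfrak t}$ is a knot satisfying the hypotheses of the proposition.

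Concretely, I would confine every even-column entry to $\{3,6\}$ and every odd-column entry to $\{3,4\}$. Then all $|t_{i,j}|\ge 3$, so $L_{\mathfrak t}$ is a strong fishnet; and every even column has all entries divisible by $3$, so $d_{2j}\ge 3>1$ for each even column, which is exactly the hypothesis guaranteeing $\mu(L_{\mathfrak t})=\beta(L_{\mathfrak t})=m/2=b$. These constraints pin down the magnitude of each parameter to legal values while leaving its parity free (with $3$ odd and $4,6$ even), and only the parities influence the strand permutation and hence the number of components of the plat closure.

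The main obstacle is to choose those parities so that $L_{\mathfrak t}$ is connected. Since $\sigma_j^{t}$ contributes the transposition $(j,j+1)$ to the strand permutation $\pi$ precisely when $t$ is odd, the odd rows (even generators) realize arbitrary products of $(2,3),(4,5),\dots,(m-2,m-1)$ and the even rows (odd generators) realize arbitrary products of $(1,2),(3,4),\dots,(m-1,m)$; together these are all adjacent transpositions of $S_m$, so by distributing transpositions across the alternating rows and filling the remaining rows with all-even \emph{identity rows} (legal and permutation-trivial), I can realize any prescribed $\pi\in S_m$ using only finitely many active rows, independently of $n$. Writing $e=(1\,2)(3\,4)\cdots(m-1\,m)$ for the cap matching, the number of components of the plat closure equals the number of cycles of $g=e\,\pi^{-1}e\,\pi$, so I need a $\pi$ for which $g$ is a single $m$-cycle (equivalently, the two matchings $e$ and $\pi^{-1}e\pi$ form a single cycle through all $m$ endpoints); I would exhibit one explicit such $\pi$ (a Coxeter-element-type permutation works) and verify that it is realizable under the magnitude constraints above.

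Assembling the pieces then finishes the proof: the parity choice makes $L_{\mathfrak t}$ a knot $K$; the strong-fishnet and gcd conditions let Proposition~\ref{thm:fishnet-quotients} supply $\mu(K)=\beta(K)=b$; and the distance formula gives $d/b=\lceil n/(2b-4)\rceil/b>r$. The one genuinely delicate point is the connectivity step, and the decoupling of $\pi$ (fixed via a few active rows) from $n$ (taken large for the distance) is what makes a knot with arbitrarily large ratio of bridge distance to bridge number attainable.
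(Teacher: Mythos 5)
Your proposal follows essentially the same route as the paper: take $m=2b$, choose parameters satisfying the strong-fishnet and gcd hypotheses of Proposition~\ref{thm:fishnet-quotients}, take $n$ large enough that $d/b=\lceil n/(2b-4)\rceil/b>r$, and use the parity freedom of the twist parameters to force connectivity. The paper packages that last step as Lemma~\ref{lem:comp-number}, which shows the strand permutation of a strong fishnet in the relevant family can be any element of $S_m$ (realized by finitely many ``active'' rows and padded with permutation-trivial even rows, exactly as you describe), so every component number, in particular $1$, is achievable. One correction to your connectivity criterion: the number of components of the plat closure is \emph{half} the number of cycles of $g=e\,\pi^{-1}e\,\pi$, because $e$ and $\pi^{-1}e\pi$ are fixed-point-free involutions and the cycles of a product of two such involutions come in pairs of equal length; consequently $g$ can never be a single $m$-cycle (for $m>1$), and the condition you would need to verify is that $g$ has exactly two cycles, each of length $m/2$. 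Your parenthetical reformulation --- that the two perfect matchings $e$ and $\pi^{-1}e\pi$ together form a single closed cycle through all $m$ plat endpoints --- is the correct condition, and a suitable $\pi$ does exist and is realizable under your magnitude constraints, so the argument goes through once the criterion is stated this way.
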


The proof of Theorem \ref{thm:fishnet-quotients}, given in Section~\ref{sec:quotients-proof}, is a straightforward application of the technique introduced in~\cite{baader2019coxeter} and the main result in~\cite{johnson2016bridge}. We also note that the above list of conditions is far from exhaustive. Other families of fishnets which satisfy MRC are given in Section~\ref{sec:other-good-fishnets}. These generalize in obvious ways to produce additional examples with arbitrarily large bridge number and bridge distance. We have singled out the families in Proposition~\ref{thm:fishnet-quotients} only because these cases lend themselves to a particularly succinct description and proof; and because they are the ones needed in the proof of Theorem~\ref{thm:sublink}. 

The rest of the paper is organized as follows.  In Section~\ref{sec:fishnets}, we prove Proposition~\ref{thm:fishnet-quotients} and we introduce other families of fishnets which admit maximal rank Coxeter quotients. Section~\ref{sec:sublink-thm-proof} contains 
the proof of Theorem~\ref{thm:sublink}. In Section~\ref{sec:distance}, we review the definition of bridge distance and prove Theorem~\ref{thm:low-d}, which states that knots for which MRC was previously established have either bounded bridge number, bounded distance, or bounded ratio of bridge distance over bridge number. 
Section~\ref{sec:limerick} contains a brief conclusion.

\section{Maximal Rank Coxeter Quotients of Fishnets} \label{sec:fishnets}
In this section, we prove Proposition~\ref{thm:fishnet-quotients} and Corollary~\ref{cor:knots-too}; and we give some additional families of fishnet links which satisfy MRC. We also lay the groundwork for the proof of Theorem~\ref{thm:sublink}.

\subsection{Maximal Rank Coxeter Quotients} \label{sec:quotients-proof}
Our technique throughout will be the following: given $L_\mathfrak{t}$ a loose fishnet of width $m$, we will construct a quotient $\varphi: \pi_1(S^3\setminus L_\mathfrak{t}) \rightarrow G_\mathfrak{t},$ where $G_\mathfrak{t}$ is a Coxeter group of Coxeter rank $\frac{m}{2}$. As the notation suggests, the group $G_\mathfrak{t}$  will be determined by the parameter vector $\mathfrak{t}$ which defines the fishnet. 
Note that $\varphi$ will be a maximal rank Coxeter quotient in the sense of~\cite{blair2024coxeter}; in particular, $\varphi$ maps meridians of $L_\mathfrak{t}$ to reflections of $G_\mathfrak{t}$. %; that is, a quotient $\varphi: \pi_1(S^3\backslash L_\mathfrak{t})\twoheadrightarrow G,$ where $G$ is a Coxeter group of 
Then, the Coxeter rank of $G_\mathfrak{t}$ is equal to the number of local maxima of $L_\mathfrak{t}$ depicted in Figure~\ref{Fig:JohnsonMoriah}. Such a quotient gives a tight lower bound on the meridional rank of $\pi_1(S^3 \backslash L_\mathfrak{t})$, as in~\cite{baader2019coxeter, baader2023bridge}, proving that $\beta(L)=\mu(L)=\frac{m}{2}$ as claimed. 

We will represent quotients $\varphi: \pi_1(S^3\backslash L_\mathfrak{t})\to G_\mathfrak{t}$ in a diagram $D$ of $L_\mathfrak{t}$ in the usual way, going back to Fox: by labeling strands in $D$ with the images of  the corresponding Wirtinger meridians in $G_\mathfrak{t}$. This labeling defines a quotient map iff at each crossing the image of the Wirtinger relation holds in $G_\mathfrak{t}$. Given a twist region in $D$, to determine the image of all strands in this region under $\varphi,$ it suffices to label the two strands incoming into the twist region from the top (resp. the bottom). 

 The following observations will be used repeatedly in the paper. Given any twist region $\sigma_{k}^{t_{j,k}}$ in the fishnet $L_\mathfrak{t}$, let 
$a$ and $b$ denote the labels of the two incoming strands from the top and let $r$ denote the order of $(ab)$ in $G$.
If $r$ divides $t_{j,k},$ then the Wirtinger meridians of the two outgoing strands are also labeled by $a$ and $b$; moreover, $a$ and $b$ appear in the same order, left to right, at the top and the bottom of the twist region. If $r=2$ (equivalently, $a$ and $b$ commute), then, regardless of whether $r$ divides $t_{j, k}$, the outgoing strands are again labeled $a$ and $b$; the order in which the labels $a, b$ appear is determined by the parity of $t_{j, k}$ in the obvious way. 
These diagrammatic constructions were inspired by Brunner~\cite{brunner1992geometric}, who studied Artin quotients of link groups. 

\begin{proof}[Proof of Proposition~\ref{thm:fishnet-quotients}]
Consider the Coxeter group 
\[
G_\mathfrak{t}=\left\langle a_1, a_2, \dots, a_{\frac{m}{2}}\ \big\lvert \ a_1^2=a_2^2\dots = a_{\frac{m}{2}}^2=1 \text{ and } (a_ja_{j+1})^{d_{2j}}=1 \text{ for all } j \in\{1,...,\tfrac{m}{2}{-}1\} \right\rangle.
\]
Since $d_j>1$ for all $j,$ it follows that $G_\mathfrak{t}$ has Coxeter rank equal to $\frac{m}{2}$~\cite[Lemma~2.1]{felikson2010reflection}. Let  $L_\mathfrak{t}$ be represented in the fishnet diagram determined by the parameter vector  $\mathfrak{t}$ in the sense of Definition~\ref{def:fishnet}.
We begin by labeling the strands containing the $\frac{m}{2}$ local maxima in the diagram by $a_1, a_2, \dots, a_{\frac{m}{2}},$ in this order. Recall that, by assumption, for each $j$, $d_j$ divides $t_{i,j} \ \forall i$.  Therefore, by the discussion above, for each twist region in the diagram, the labels of the incoming strands are the same as those of the outgoing strands. This implies that the induced labels at the two endpoints of a strand containing a local minimum agree. We have therefore arrived at a coherent coloring of the diagram by elements of the group  $G_\mathfrak{t}$. This defines the desired quotient $\varphi: \pi_1(S^3\backslash L_\mathfrak{t})\to G_\mathfrak{t}$ of Coxeter rank $\frac{m}{2}$, proving the claim that $\beta(L_\mathfrak{t})=\mu(L_\mathfrak{t})=\frac{m}{2}$.

To prove the final claim of the theorem, assume that the parameter vector $\mathfrak{t}$ satisfies $|t_{i,j}|\geq 3 \ \forall i,j$. Then, $L_\mathfrak{t}$  fulfills the hypotheses of Theorem~\ref{thm:JohnsonMoriah}. Therefore, the distance of the induced bridge sphere in the fishnet diagram is $\lceil n /(m - 4) \rceil$. 
\end{proof}

\begin{proof}[Proof of Corollary~\ref{cor:knots-too}] 
    Fix $r\in \mathbb{R}_{>0}$ and $b\in \mathbb{N}_{\geq 3}$. Let $L_\mathfrak{t}$ be the fishnet determined by the parameter vector
    \[ 
    \mathfrak{t}= (t_{1,2},t_{1,4}, \dots, t_{1,m-2}; \dots, t_{n,m{-}2}),
    \]
    where $\frac{m}{2}=b\geq 3$ and where we choose $n$ such that $n\geq \frac{rm^2}{2}.$ In this construction, we may select the integer parameters freely, so we assume that $\forall i,j$, we have $t_{i,j}>3$ and $d_{2j}>1$. By Proposition~\ref{thm:fishnet-quotients}, it follows that $\beta(L_\mathfrak{t})=\frac{m}{2}$ and that $d(L_\mathfrak{t})\geq \frac{n}{m-4}$. Thus, $\frac{d}{b}\geq \frac{n}{m-4}\cdot\frac{2}{m} >r,$ as claimed. To conclude the proof, note that by Lemma~\ref{lem:comp-number} we may furthermore ensure that $L_\mathfrak{t}$ is a knot.
\end{proof}
\subsection{Other Fishnets which satisfy MRC}\label{sec:other-good-fishnets}

%%%%%%% MONSTER FISHNET %%%%%%%%
\begin{figure}[ht]
    \centering

    \begin{tikzpicture}
        [x=1mm,y=1mm,
        box/.style={rectangle, inner sep=0mm, draw=black!75, fill=white, thick, minimum width=3mm, minimum height=3mm},
        box2/.style={rectangle, inner sep=0mm, draw=black!15, fill=white, thick, minimum width=3mm, minimum height=3mm}]
        \definecolor{comp1}{HTML}{BE0032}
        \definecolor{comp2}{HTML}{191970}
        \definecolor{comp3}{HTML}{177245}
        \colorlet{comp4}{black!15}
        
        %lines
        %%%%%%%%%%%%%%% comp1 %%%%%%%%%%%%%%%%%%%%%%%%%%%
        %6
        \draw[comp1, thick]
%        (-0.5,0) -- (-4.5,10)
        (0.5,0) -- (4.5,10)
        (9.5,0) -- (5.5,10)
%        (10.5,0) -- (14.5,10)
        (19.5,0) -- (15.5,10)
%        (20.5,0) -- (24.5,10)
        (29.5,0) -- (25.5,10)
%        (30.5,0) -- (34.5,10)
%        (39.5,0) -- (35.5,10)
        (40.5,0) -- (44.5,10)
%        (49.5,0) -- (45.5,10)
%        (50.5,0) -- (54.5,10)
%        (59.5,0) -- (55.5,10)
%        (60.5,0) -- (64.5,10)
        ;
        %5
        \draw[comp1, thick]
%        (-4.5,10) -- (-0.5,20)
        (4.5,10) -- (0.5,20)
        (5.5,10) -- (9.5,20)
%        (14.5,10) -- (10.5,20)
        (15.5,10) -- (19.5,20)
%        (24.5,10) -- (20.5,20)
        (25.5,10) -- (29.5,20)
%        (34.5,10) -- (30.5,20)
%        (35.5,10) -- (39.5,20)
%        (44.5,10) -- (40.5,20)
        (45.5,10) -- (49.5,20)
%        (54.5,10) -- (50.5,20)
%        (55.5,10) -- (59.5,20)
%        (64.5,10) -- (60.5,20)
        ;
        %4
        \draw[comp1, thick]
%        (-0.5,20) -- (-4.5,30)
        (0.5,20) -- (4.5,30)
        (9.5,20) -- (5.5,30)
%        (10.5,20) -- (14.5,30)
%        (19.5,20) -- (15.5,30)
        (20.5,20) -- (24.5,30)
%        (29.5,20) -- (25.5,30)
        (30.5,20) -- (34.5,30)
%        (39.5,20) -- (35.5,30)
%        (40.5,20) -- (44.5,30)
        (49.5,20) -- (45.5,30)
%        (50.5,20) -- (54.5,30)
%        (59.5,20) -- (55.5,30)
%        (60.5,20) -- (64.5,30)
        ;
        %3
        \draw[comp1, thick]
        (-4.5,30) -- (-0.5,40)
        (4.5,30) -- (0.5,40)
        (5.5,30) -- (9.5,40)
%        (14.5,30) -- (10.5,40)
%        (15.5,30) -- (19.5,40)
%        (24.5,30) -- (20.5,40)
        (25.5,30) -- (29.5,40)
        (34.5,30) -- (30.5,40)
%        (35.5,30) -- (39.5,40)
        (44.5,30) -- (40.5,40)
%        (45.5,30) -- (49.5,40)
%        (54.5,30) -- (50.5,40)
%        (55.5,30) -- (59.5,40)
%        (64.5,30) -- (60.5,40)
        ;
        %2
        \draw[comp1, thick]
        (-0.5,40) -- (-4.5,50)
        (0.5,40) -- (4.5,50)
%        (9.5,40) -- (5.5,50)
        (10.5,40) -- (14.5,50)
%        (19.5,40) -- (15.5,50)
%        (20.5,40) -- (24.5,50)
        (29.5,40) -- (25.5,50)
        (30.5,40) -- (34.5,50)
        (39.5,40) -- (35.5,50)
%        (40.5,40) -- (44.5,50)
%        (49.5,40) -- (45.5,50)
%        (50.5,40) -- (54.5,50)
%        (59.5,40) -- (55.5,50)
%        (60.5,40) -- (64.5,50)
        ;
        %1
        \draw[comp1, thick]
%        (-4.5,50) -- (-0.5,60)
        (4.5,50) -- (0.5,60)
%        (5.5,50) -- (9.5,60)
%        (14.5,50) -- (10.5,60)
        (15.5,50) -- (19.5,60)
%        (24.5,50) -- (20.5,60)
        (25.5,50) -- (29.5,60)
        (34.5,50) -- (30.5,60)
        (35.5,50) -- (39.5,60)
%        (44.5,50) -- (40.5,60)
%        (45.5,50) -- (49.5,60)
%        (54.5,50) -- (50.5,60)
%        (55.5,50) -- (59.5,60)
%        (64.5,50) -- (60.5,60)
        ;
        %left edge
        \draw[comp1, thick]
        (-6.25,0) -- (-6.25,10)
        (-6.25,10) -- (-6.25,30)
%        (-6.5,30) -- (-6.5,50)
        (-6.25,50) -- (-6.25,60)
        ;
        %right edge
%        \draw[comp1, thick]
%        (66.5,0) -- (66.5,10)
%        (66.5,10) -- (66.5,30)
%        (66.5,30) -- (66.5,50)
%        (66.5,50) -- (66.5,60)
%        ;
        %bottom arcs
        \draw[comp1, thick]
        (-6.25,0) .. controls (-6.25,-7.5) and (-0.5,-7.5) .. (-0.5,0)
%        (0.5,0) .. controls (0.5,-7.5) and (9.5,-7.5) .. (9.5,0)
        (10.5,0) .. controls (10.5,-7.5) and (19.5,-7.5) .. (19.5,0)
%        (20.5,0) .. controls (20.5,-7.5) and (29.5,-7.5) .. (29.5,0)
        (30.5,0) .. controls (30.5,-7.5) and (39.5,-7.5) .. (39.5,0)
%        (40.5,0) .. controls (40.5,-7.5) and (49.5,-7.5) .. (49.5,0)
%        (50.5,0) .. controls (50.5,-7.5) and (59.5,-7.5) .. (59.5,0)
%        (60.5,0) .. controls (60.5,-7.5) and (66.5, -7.5) .. (66.5,0)
        ;
        %top arcs
        \draw[comp1, thick]
        (-6.25,60) .. controls (-6.25,67.5) and (-0.5,67.5) .. (-0.5,60)
%        (0.5,60) .. controls (0.5,67.5) and (9.5,67.5) .. (9.5,60)
%        (10.5,60) .. controls (10.5,67.5) and (19.5,67.5) .. (19.5,60)
        (20.5,60) .. controls (20.5,67.5) and (29.5,67.5) .. (29.5,60)
        (30.5,60) .. controls (30.5,67.5) and (39.5,67.5) .. (39.5,60)
%        (40.5,60) .. controls (40.5,67.5) and (49.5,67.5) .. (49.5,60)
%        (50.5,60) .. controls (50.5,67.5) and (59.5,67.5) .. (59.5,60)
%        (60.5,60) .. controls (60.5,67.5) and (66.5, 67.5) .. (66.5,60)
        ;

        %%%%%%%%%%%%%%% comp2 %%%%%%%%%%%%%%%%%%%%%%%%%%%
        %6
        \draw[comp2, thick]
%        (-0.5,0) -- (-4.5,10)
%        (0.5,0) -- (4.5,10)
%        (9.5,0) -- (5.5,10)
%        (10.5,0) -- (14.5,10)
%        (19.5,0) -- (15.5,10)
%        (20.5,0) -- (24.5,10)
%        (29.5,0) -- (25.5,10)
%        (30.5,0) -- (34.5,10)
%        (39.5,0) -- (35.5,10)
%        (40.5,0) -- (44.5,10)
%        (49.5,0) -- (45.5,10)
        (50.5,0) -- (54.5,10)
        (59.5,0) -- (55.5,10)
        (60.5,0) -- (64.5,10)
        ;
        %5
        \draw[comp2, thick]
%        (-4.5,10) -- (-0.5,20)
%        (4.5,10) -- (0.5,20)
%        (5.5,10) -- (9.5,20)
%        (14.5,10) -- (10.5,20)
%        (15.5,10) -- (19.5,20)
%        (24.5,10) -- (20.5,20)
%        (25.5,10) -- (29.5,20)
%        (34.5,10) -- (30.5,20)
%        (35.5,10) -- (39.5,20)
%        (44.5,10) -- (40.5,20)
%        (45.5,10) -- (49.5,20)
        (54.5,10) -- (50.5,20)
        (55.5,10) -- (59.5,20)
        (64.5,10) -- (60.5,20)
        ;
        %4
        \draw[comp2, thick]
%        (-0.5,20) -- (-4.5,30)
%        (0.5,20) -- (4.5,30)
%        (9.5,20) -- (5.5,30)
%        (10.5,20) -- (14.5,30)
%        (19.5,20) -- (15.5,30)
%        (20.5,20) -- (24.5,30)
%        (29.5,20) -- (25.5,30)
%        (30.5,20) -- (34.5,30)
%        (39.5,20) -- (35.5,30)
%        (40.5,20) -- (44.5,30)
%        (49.5,20) -- (45.5,30)
        (50.5,20) -- (54.5,30)
        (59.5,20) -- (55.5,30)
        (60.5,20) -- (64.5,30)
        ;
        %3
        \draw[comp2, thick]
%        (-4.5,30) -- (-0.5,40)
%        (4.5,30) -- (0.5,40)
%        (5.5,30) -- (9.5,40)
%        (14.5,30) -- (10.5,40)
%        (15.5,30) -- (19.5,40)
%        (24.5,30) -- (20.5,40)
%        (25.5,30) -- (29.5,40)
%        (34.5,30) -- (30.5,40)
%        (35.5,30) -- (39.5,40)
%        (44.5,30) -- (40.5,40)
%        (45.5,30) -- (49.5,40)
        (54.5,30) -- (50.5,40)
        (55.5,30) -- (59.5,40)
        (64.5,30) -- (60.5,40)
        ;
        %2
        \draw[comp2, thick]
%        (-0.5,40) -- (-4.5,50)
%        (0.5,40) -- (4.5,50)
%        (9.5,40) -- (5.5,50)
%        (10.5,40) -- (14.5,50)
%        (19.5,40) -- (15.5,50)
%        (20.5,40) -- (24.5,50)
%        (29.5,40) -- (25.5,50)
%        (30.5,40) -- (34.5,50)
%        (39.5,40) -- (35.5,50)
%        (40.5,40) -- (44.5,50)
        (49.5,40) -- (45.5,50)
%        (50.5,40) -- (54.5,50)
        (59.5,40) -- (55.5,50)
        (60.5,40) -- (64.5,50)
        ;
        %1
        \draw[comp2, thick]
%        (-4.5,50) -- (-0.5,60)
%        (4.5,50) -- (0.5,60)
%        (5.5,50) -- (9.5,60)
%        (14.5,50) -- (10.5,60)
%        (15.5,50) -- (19.5,60)
%        (24.5,50) -- (20.5,60)
%        (25.5,50) -- (29.5,60)
%        (34.5,50) -- (30.5,60)
%        (35.5,50) -- (39.5,60)
%        (44.5,50) -- (40.5,60)
        (45.5,50) -- (49.5,60)
%        (54.5,50) -- (50.5,60)
        (55.5,50) -- (59.5,60)
        (64.5,50) -- (60.5,60)
        ;
        %left edge
%        \draw[comp2, thick]
%        (-6.5,0) -- (-6.5,10)
%        (-6.5,10) -- (-6.5,30)
%        (-6.5,30) -- (-6.5,50)
%        (-6.5,50) -- (-6.5,60)
%        ;
        %right edge
        \draw[comp2, thick]
        (66.25,0) -- (66.25,10)
        (66.25,10) -- (66.25,30)
        (66.25,30) -- (66.25,50)
        (66.25,50) -- (66.25,60)
        ;
        %bottom arcs
        \draw[comp2, thick]
%        (-6.5,0) .. controls (-6.5,-7.5) and (-0.5,-7.5) .. (-0.5,0)
%        (0.5,0) .. controls (0.5,-7.5) and (9.5,-7.5) .. (9.5,0)
%        (10.5,0) .. controls (10.5,-7.5) and (19.5,-7.5) .. (19.5,0)
%        (20.5,0) .. controls (20.5,-7.5) and (29.5,-7.5) .. (29.5,0)
%        (30.5,0) .. controls (30.5,-7.5) and (39.5,-7.5) .. (39.5,0)
%        (40.5,0) .. controls (40.5,-7.5) and (49.5,-7.5) .. (49.5,0)
        (50.5,0) .. controls (50.5,-7.5) and (59.5,-7.5) .. (59.5,0)
        (60.5,0) .. controls (60.5,-7.5) and (66.25, -7.5) .. (66.25,0)
        ;
        %top arcs
        \draw[comp2, thick]
%        (-6.5,60) .. controls (-6.5,67.5) and (-0.5,67.5) .. (-0.5,60)
%        (0.5,60) .. controls (0.5,67.5) and (9.5,67.5) .. (9.5,60)
%        (10.5,60) .. controls (10.5,67.5) and (19.5,67.5) .. (19.5,60)
%        (20.5,60) .. controls (20.5,67.5) and (29.5,67.5) .. (29.5,60)
%        (30.5,60) .. controls (30.5,67.5) and (39.5,67.5) .. (39.5,60)
%        (40.5,60) .. controls (40.5,67.5) and (49.5,67.5) .. (49.5,60)
        (50.5,60) .. controls (50.5,67.5) and (59.5,67.5) .. (59.5,60)
        (60.5,60) .. controls (60.5,67.5) and (66.25, 67.5) .. (66.25,60)
        ;

        %%%%%%%%%%%%%%% comp3 %%%%%%%%%%%%%%%%%%%%%%%%%%%
        %6
        \draw[comp3, thick]
        (-0.5,0) -- (-4.5,10)
%        (0.5,0) -- (4.5,10)
%        (9.5,0) -- (5.5,10)
        (10.5,0) -- (14.5,10)
%        (19.5,0) -- (15.5,10)
%        (20.5,0) -- (24.5,10)
%        (29.5,0) -- (25.5,10)
%        (30.5,0) -- (34.5,10)
%        (39.5,0) -- (35.5,10)
%        (40.5,0) -- (44.5,10)
%        (49.5,0) -- (45.5,10)
%        (50.5,0) -- (54.5,10)
%        (59.5,0) -- (55.5,10)
%        (60.5,0) -- (64.5,10)
        ;
        %5
        \draw[comp3, thick]
        (-4.5,10) -- (-0.5,20)
%        (4.5,10) -- (0.5,20)
%        (5.5,10) -- (9.5,20)
        (14.5,10) -- (10.5,20)
%        (15.5,10) -- (19.5,20)
%        (24.5,10) -- (20.5,20)
%        (25.5,10) -- (29.5,20)
%        (34.5,10) -- (30.5,20)
%        (35.5,10) -- (39.5,20)
%        (44.5,10) -- (40.5,20)
%        (45.5,10) -- (49.5,20)
%        (54.5,10) -- (50.5,20)
%        (55.5,10) -- (59.5,20)
%        (64.5,10) -- (60.5,20)
        ;
        %4
        \draw[comp3, thick]
        (-0.5,20) -- (-4.5,30)
%        (0.5,20) -- (4.5,30)
%        (9.5,20) -- (5.5,30)
        (10.5,20) -- (14.5,30)
%        (19.5,20) -- (15.5,30)
%        (20.5,20) -- (24.5,30)
%        (29.5,20) -- (25.5,30)
%        (30.5,20) -- (34.5,30)
%        (39.5,20) -- (35.5,30)
%        (40.5,20) -- (44.5,30)
%        (49.5,20) -- (45.5,30)
%        (50.5,20) -- (54.5,30)
%        (59.5,20) -- (55.5,30)
%        (60.5,20) -- (64.5,30)
        ;
        %3
        \draw[comp3, thick]
%        (-4.5,30) -- (-0.5,40)
%        (4.5,30) -- (0.5,40)
%        (5.5,30) -- (9.5,40)
%        (14.5,30) -- (10.5,40)
        (15.5,30) -- (19.5,40)
%        (24.5,30) -- (20.5,40)
%        (25.5,30) -- (29.5,40)
%        (34.5,30) -- (30.5,40)
%        (35.5,30) -- (39.5,40)
%        (44.5,30) -- (40.5,40)
%        (45.5,30) -- (49.5,40)
%        (54.5,30) -- (50.5,40)
%        (55.5,30) -- (59.5,40)
%        (64.5,30) -- (60.5,40)
        ;
        %2
        \draw[comp3, thick]
%        (-0.5,40) -- (-4.5,50)
%        (0.5,40) -- (4.5,50)
%        (9.5,40) -- (5.5,50)
%        (10.5,40) -- (14.5,50)
        (19.5,40) -- (15.5,50)
%        (20.5,40) -- (24.5,50)
%        (29.5,40) -- (25.5,50)
%        (30.5,40) -- (34.5,50)
%        (39.5,40) -- (35.5,50)
%        (40.5,40) -- (44.5,50)
%        (49.5,40) -- (45.5,50)
%        (50.5,40) -- (54.5,50)
%        (59.5,40) -- (55.5,50)
%        (60.5,40) -- (64.5,50)
        ;
        %1
        \draw[comp3, thick]
        (-4.5,50) -- (-0.5,60)
%        (4.5,50) -- (0.5,60)
%       (5.5,50) -- (9.5,60)
        (14.5,50) -- (10.5,60)
%        (15.5,50) -- (19.5,60)
%        (24.5,50) -- (20.5,60)
%        (25.5,50) -- (29.5,60)
%        (34.5,50) -- (30.5,60)
%        (35.5,50) -- (39.5,60)
%        (44.5,50) -- (40.5,60)
%        (45.5,50) -- (49.5,60)
%        (54.5,50) -- (50.5,60)
%        (55.5,50) -- (59.5,60)
%        (64.5,50) -- (60.5,60)
        ;
        %left edge
        \draw[comp3, thick]
%        (-6.5,0) -- (-6.5,10)
%        (-6.5,10) -- (-6.5,30)
        (-6.25,30) -- (-6.25,50)
%        (-6.5,50) -- (-6.5,60)
        ;
        %right edge
%        \draw[comp3, thick]
%        (66.5,0) -- (66.5,10)
%        (66.5,10) -- (66.5,30)
%        (66.5,30) -- (66.5,50)
%        (66.5,50) -- (66.5,60)
%        ;
        %bottom arcs
        \draw[comp3, thick]
%        (-6.5,0) .. controls (-6.5,-7.5) and (-0.5,-7.5) .. (-0.5,0)
        (0.5,0) .. controls (0.5,-7.5) and (9.5,-7.5) .. (9.5,0)
%        (10.5,0) .. controls (10.5,-7.5) and (19.5,-7.5) .. (19.5,0)
%        (20.5,0) .. controls (20.5,-7.5) and (29.5,-7.5) .. (29.5,0)
%        (30.5,0) .. controls (30.5,-7.5) and (39.5,-7.5) .. (39.5,0)
%        (40.5,0) .. controls (40.5,-7.5) and (49.5,-7.5) .. (49.5,0)
%        (50.5,0) .. controls (50.5,-7.5) and (59.5,-7.5) .. (59.5,0)
%        (60.5,0) .. controls (60.5,-7.5) and (66.5, -7.5) .. (66.5,0)
        ;
        %top arcs
        \draw[comp3, thick]
%        (-6.5,60) .. controls (-6.5,67.5) and (-0.5,67.5) .. (-0.5,60)
        (0.5,60) .. controls (0.5,67.5) and (9.5,67.5) .. (9.5,60)
%        (10.5,60) .. controls (10.5,67.5) and (19.5,67.5) .. (19.5,60)
%        (20.5,60) .. controls (20.5,67.5) and (29.5,67.5) .. (29.5,60)
%        (30.5,60) .. controls (30.5,67.5) and (39.5,67.5) .. (39.5,60)
%        (40.5,60) .. controls (40.5,67.5) and (49.5,67.5) .. (49.5,60)
%        (50.5,60) .. controls (50.5,67.5) and (59.5,67.5) .. (59.5,60)
%        (60.5,60) .. controls (60.5,67.5) and (66.5, 67.5) .. (66.5,60)
        ;

        %%%%%%%%%%%%%%% comp4 %%%%%%%%%%%%%%%%%%%%%%%%%%%
        %6
        \draw[comp4, thick]
%        (-0.5,0) -- (-4.5,10)
%        (0.5,0) -- (4.5,10)
%        (9.5,0) -- (5.5,10)
%        (10.5,0) -- (14.5,10)
%        (19.5,0) -- (15.5,10)
        (20.5,0) -- (24.5,10)
%        (29.5,0) -- (25.5,10)
        (30.5,0) -- (34.5,10)
        (39.5,0) -- (35.5,10)
%        (40.5,0) -- (44.5,10)
        (49.5,0) -- (45.5,10)
%        (50.5,0) -- (54.5,10)
%        (59.5,0) -- (55.5,10)
%        (60.5,0) -- (64.5,10)
        ;
        %5
        \draw[comp4, thick]
%        (-4.5,10) -- (-0.5,20)
%        (4.5,10) -- (0.5,20)
%        (5.5,10) -- (9.5,20)
%        (14.5,10) -- (10.5,20)
%        (15.5,10) -- (19.5,20)
        (24.5,10) -- (20.5,20)
%        (25.5,10) -- (29.5,20)
        (34.5,10) -- (30.5,20)
        (35.5,10) -- (39.5,20)
        (44.5,10) -- (40.5,20)
%        (45.5,10) -- (49.5,20)
%        (54.5,10) -- (50.5,20)
%        (55.5,10) -- (59.5,20)
%        (64.5,10) -- (60.5,20)
        ;
        %4
        \draw[comp4, thick]
%        (-0.5,20) -- (-4.5,30)
%        (0.5,20) -- (4.5,30)
%        (9.5,20) -- (5.5,30)
%        (10.5,20) -- (14.5,30)
        (19.5,20) -- (15.5,30)
%        (20.5,20) -- (24.5,30)
        (29.5,20) -- (25.5,30)
%        (30.5,20) -- (34.5,30)
        (39.5,20) -- (35.5,30)
        (40.5,20) -- (44.5,30)
%        (49.5,20) -- (45.5,30)
%        (50.5,20) -- (54.5,30)
%        (59.5,20) -- (55.5,30)
%        (60.5,20) -- (64.5,30)
        ;
        %3
        \draw[comp4, thick]
%        (-4.5,30) -- (-0.5,40)
%        (4.5,30) -- (0.5,40)
%        (5.5,30) -- (9.5,40)
        (14.5,30) -- (10.5,40)
%        (15.5,30) -- (19.5,40)
        (24.5,30) -- (20.5,40)
%        (25.5,30) -- (29.5,40)
%        (34.5,30) -- (30.5,40)
        (35.5,30) -- (39.5,40)
%        (44.5,30) -- (40.5,40)
        (45.5,30) -- (49.5,40)
%        (54.5,30) -- (50.5,40)
%        (55.5,30) -- (59.5,40)
%        (64.5,30) -- (60.5,40)
        ;
        %2
        \draw[comp4, thick]
%        (-0.5,40) -- (-4.5,50)
%        (0.5,40) -- (4.5,50)
        (9.5,40) -- (5.5,50)
%        (10.5,40) -- (14.5,50)
%        (19.5,40) -- (15.5,50)
        (20.5,40) -- (24.5,50)
%        (29.5,40) -- (25.5,50)
%        (30.5,40) -- (34.5,50)
%        (39.5,40) -- (35.5,50)
        (40.5,40) -- (44.5,50)
%        (49.5,40) -- (45.5,50)
        (50.5,40) -- (54.5,50)
%        (59.5,40) -- (55.5,50)
%        (60.5,40) -- (64.5,50)
        ;
        %1
        \draw[comp4, thick]
%        (-4.5,50) -- (-0.5,60)
%        (4.5,50) -- (0.5,60)
        (5.5,50) -- (9.5,60)
%        (14.5,50) -- (10.5,60)
%        (15.5,50) -- (19.5,60)
        (24.5,50) -- (20.5,60)
%        (25.5,50) -- (29.5,60)
%        (34.5,50) -- (30.5,60)
%        (35.5,50) -- (39.5,60)
        (44.5,50) -- (40.5,60)
%        (45.5,50) -- (49.5,60)
        (54.5,50) -- (50.5,60)
%        (55.5,50) -- (59.5,60)
%        (64.5,50) -- (60.5,60)
        ;

        %bottom arcs
        \draw[comp4, thick]
%        (-6.5,0) .. controls (-6.5,-7.5) and (-0.5,-7.5) .. (-0.5,0)
%        (0.5,0) .. controls (0.5,-7.5) and (9.5,-7.5) .. (9.5,0)
%        (10.5,0) .. controls (10.5,-7.5) and (19.5,-7.5) .. (19.5,0)
        (20.5,0) .. controls (20.5,-7.5) and (29.5,-7.5) .. (29.5,0)
%        (30.5,0) .. controls (30.5,-7.5) and (39.5,-7.5) .. (39.5,0)
        (40.5,0) .. controls (40.5,-7.5) and (49.5,-7.5) .. (49.5,0)
%        (50.5,0) .. controls (50.5,-7.5) and (59.5,-7.5) .. (59.5,0)
%        (60.5,0) .. controls (60.5,-7.5) and (66.5, -7.5) .. (66.5,0)
        ;
        %top arcs
        \draw[comp4, thick]
%        (-6.5,60) .. controls (-6.5,67.5) and (-0.5,67.5) .. (-0.5,60)
%        (0.5,60) .. controls (0.5,67.5) and (9.5,67.5) .. (9.5,60)
        (10.5,60) .. controls (10.5,67.5) and (19.5,67.5) .. (19.5,60)
%        (20.5,60) .. controls (20.5,67.5) and (29.5,67.5) .. (29.5,60)
%        (30.5,60) .. controls (30.5,67.5) and (39.5,67.5) .. (39.5,60)
        (40.5,60) .. controls (40.5,67.5) and (49.5,67.5) .. (49.5,60)
%        (50.5,60) .. controls (50.5,67.5) and (59.5,67.5) .. (59.5,60)
%        (60.5,60) .. controls (60.5,67.5) and (66.5, 67.5) .. (66.5,60)
        ;

        %boxes
        \node[box](72) at (0,0){\textcolor{black!50}{$\mathsf{3}$}};
        \node[box](74) at (10,0){};
        \node[box](76) at (20,0){};
        \node[box](78) at (30,0){};
        \node[box](710) at (40,0){};
        \node[box](712) at (50,0){};
        \node[box](714) at (60,0){};

        \node[box](61) at (-5,10){};
        \node[box](63) at (5,10){};
        \node[box](65) at (15,10){};
        \node[box](67) at (25,10){};
        \node[box2](69) at (35,10){\textcolor{black!15}{$\mathsf{1\!1}$}}; 
        \node[box](611) at (45,10){};
        \node[box](613) at (55,10){};
        \node[box](615) at (65,10){};

        \node[box](52) at (0,20){\textcolor{black!50}{$\mathsf{6}$}};
        \node[box](54) at (10,20){\textcolor{black!50}{$\mathsf{1\!0}$}};
        \node[box](56) at (20,20){};
        \node[box](58) at (30,20){};
        \node[box2](510) at (40,20){};
        \node[box](512) at (50,20){\textcolor{black!50}{$\mathsf{1\!3}$}};
        \node[box](514) at (60,20){};

        \node[box](41) at (-5,30){};
        \node[box](43) at (5,30){\textcolor{black!50}{$\mathsf{1}$}};
        \node[box](45) at (15,30){};
        \node[box](47) at (25,30){};
        \node[box](49) at (35,30){};
        \node[box](411) at (45,30){};
        \node[box](413) at (55,30){};
        \node[box](415) at (65,30){};

        \node[box](32) at (0,40){\textcolor{black!50}{$\mathsf{3}$}};
        \node[box](34) at (10,40){};
        \node[box](36) at (20,40){};
        \node[box](38) at (30,40){\textcolor{black!50}{$\mathsf{7}$}};
        \node[box](310) at (40,40){};
        \node[box](312) at (50,40){};
        \node[box](314) at (60,40){};

        \node[box](21) at (-5,50){};
        \node[box](23) at (5,50){};
        \node[box](25) at (15,50){\textcolor{black!50}{$\mathsf{5}$}};
        \node[box](27) at (25,50){};
        \node[box](29) at (35,50){};
        \node[box](211) at (45,50){};
        \node[box](213) at (55,50){};
        \node[box](215) at (65,50){};

        \node[box](12) at (0,60){\textcolor{black!50}{$\mathsf{3}$}};
        \node[box](14) at (10,60){};
        \node[box](16) at (20,60){};
        \node[box](18) at (30,60){\textcolor{black!50}{$\mathsf{7}$}};
        \node[box](110) at (40,60){};
        \node[box](112) at (50,60){};
        \node[box](114) at (60,60){};
        
    \end{tikzpicture}
    
    \caption{A fishnet $M$ which satisfies MRC without fulfilling the hypotheses of Proposition~\ref{thm:fishnet-quotients}. $M$ is the link of a green unknot, a red and blue sublink in $5$-bridge position, and a $(2, 11)$-torus knot, faintly pictured. Unnumbered twist-regions which involve two colors are determined mod~$2$ as dictated by the colors. Omitted parameters in monochromatic twist regions are arbitrary, subject to the condition that the last three columns determine a non-trivial blue summand. }
    \label{fig:monster-fishnet}
\end{figure}
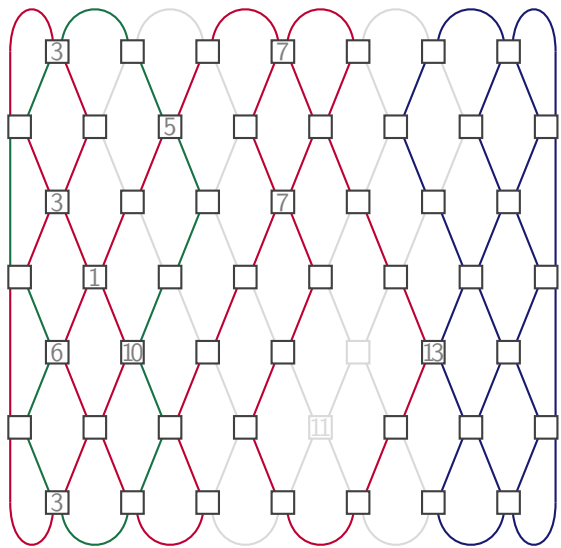

In what follows, we will work in the setting of loose fishnets. 
We will see in this section many new families of fishnets which admit maximal rank Coxeter quotients and thus satisfy MRC. We encourage the reader to understand these families as extensions of representative examples. 

\subsubsection{Decomposing Fishnets} 
The key observation is that the existence of a quotient for a fishnet $L_\mathfrak{t}$ can be detected by decomposing $L_\mathfrak{t}$ into a union of sublinks, $L_\mathfrak{t}=\bigcup_i L_i$. In particular, the $L_i$ are not necessarily knots. 

Consider the fishnet $M$ given by Figure~\ref{fig:monster-fishnet}. With the majority of boxes at most determined up to parity, $M$ does not necessarily satisfy the gcd condition of Proposition~\ref{thm:fishnet-quotients}. However, $M$ admits a maximal rank Coxeter quotient. For fishnets of more than one component, it is common for the existence of a quotient to be dependent on relatively few parameters, as is the case for $M$.

The colors in Figure~\ref{fig:monster-fishnet} are included to guide the construction of the desired quotient. First, the faintly pictured sublink (colored so for diagrammatic clarity) is a 2-bridge link and therefore has a maximal rank Coxeter quotient given by a dihedral group. In this case, the quotient group is $D_{11},$ the dihedral group of order 22. Next, consider the red/blue sublink. Because the red meets the blue in a single twist region, we regard each as a summand. If both summands admit maximal rank Coxeter quotients, then the red/blue sublink does as well. By assumption, the blue is a 2-bridge link summand, hence has some dihedral quotient, $D_b$. 
The red is a 3-bridge link summand, but due to the twist region parameterized by $t_{4,2}=1$, it fails the gcd condition given in Proposition~\ref{thm:fishnet-quotients} and does not have an obvious quotient. Instead, we consider the red summand along with the green unknot. Together these have 4-bridges, and in fact satisfy the gcd condition needed for the red/green to admit a maximal rank Coxeter quotient, $C_{r,g}$. Moreover, the green does not meet the blue in any twist region, so we can regard the red/green and the blue as two summands of the red/green/blue link, summands which both admit maximal rank quotients. By reconciling these two quotient groups, we construct a maximal rank Coxeter quotient, $C_{r,g,b}$, for the red/green/blue sublink. Acting by conjugation on $C_{r,g}$ and $D_b$ ensures that two Coxeter generators, $r$ and $b$, from $C_{r,g}$ and $D_b$ respectively, label the strands entering the top of twist region $t_{5,12}=13$. Then, a minimal Coxeter generating set is given by the union of minimal Coxeter generating sets of $C_{r,g}$ and $D_{b}$; the relations are given by the union of relations for these two groups, together with the additional relation $(rb)^{13}=1$. At last, this results in the maximal rank Coxeter quotient for $M$, $C_M = D_{11} \oplus C_{r,g,b}$.

Note that none of the components of the link in the above example is presented in a diagram of a regular fishnet (not even the blue summand). Moreover, the example illustrates the following more general phenomena:
\begin{enumerate}[(1)]
    \item \label{obv: direct sum} If $k$ disjoint sublinks of a link $L= L_1 \cup \dots \cup L_k$ independently admit maximal rank\footnote{In this context, ``maximal" is meant in the following sense: the Coxeter rank of the quotient of $L_i$ equals an upper bound on the bridge number of $L_i$ as a \textit{sublink} of $L$, that is, the number of bridges that $L_i$ contributes to $L$.} Coxeter quotients, $G_1, \dots, G_k$, then $L$ admits a maximal rank quotient onto the direct sum $G:=G_1\oplus \dots \oplus G_k$. The existence of a quotient $\pi_1(S^3\backslash L)\twoheadrightarrow G$ is not affected by the way these sublinks link each other. In particular, they do not need to form a regular fishnet.
    \item  Our ability to detect the existence of a quotient in a fixed diagram may depend on the way we subdivide a link into a union of sublinks. For instance, if some component $K_s$ of a link $L$ does not admit a maximal rank Coxeter quotient, it may still be the case that the link $K_s \cup K_t$ admits one, where $K_t$ is some other component of $L$ which meets $K_s$ in at least one twist region.
    \item Let $L_1$ and $L_2$ form a split link,  presented in a diagram such that $L_1$ and $L_2$ are contained in disjoint balls in the plane. Leaving all but two strands unchanged in the diagrams of $L_1$ and $L_2$, we can obtain a diagram of a link of the form $L_1\#T_{(2,t)}\#L_2 =:L$, where $T_{(2,t)}$ denotes the $(2, t)$-torus link. If the initial diagrams $D_1,D_2$ of $L_1,L_2$ are (possibly loose) fishnets, it is easy to obtain a (necessarily loose) fishnet diagram for $L$: simply juxtapose $D_1$ and $D_2$ and insert a new column between them, in which one parameter is equal to $t$ and the remaining are equal to zero. (When $L_1$ and $L_2$  have different height, rows of zeroes are added to either $D_1$ or $D_2$.) In particular, if $L_1$ and $L_2$ admit quotients onto the Coxeter groups $G_1$ and $G_2$, then $L$ admits a quotient onto a Coxeter group determined by $G_1$, $G_2$, and $t$ in the natural way.
\end{enumerate}

Using the above remarks, one readily constructs new families of links which admit maximal rank Coxeter quotients. In particular, one can build regular fishnets of arbitrary dimensions which admit ``nonobvious" Coxeter quotients and satisfy MRC.

\subsubsection{Combining Fishnets} \label{sec:interweaving}

In Figure~\ref{fig:interweaving} we introduce the key construction used in Section~\ref{sec:sublink-thm-proof}. 

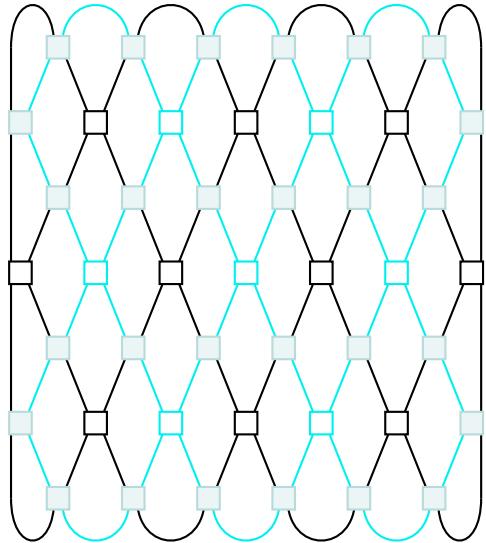
\begin{figure}[ht]
    \centering

    \begin{tikzpicture}
        [x=1mm,y=1mm,
        box1/.style={rectangle, inner sep=0mm, draw=blue1, fill=white, thick, minimum width=3mm, minimum height=3mm},
        box2/.style={rectangle, inner sep=0mm, draw=blue2, fill=white, thick, minimum width=3mm, minimum height=3mm},
        box3/.style={rectangle, inner sep=0mm, draw=blue3!55, fill=blue3!15, thick, minimum width=3mm, minimum height=3mm}]
        \colorlet{blue1}{black}
        \definecolor{blue2}{HTML}{00F0F0}
        \colorlet{blue3}{blue1!50!blue2!50}
        
        %lines
        %%%%%%%%%%%%%%% comp1 %%%%%%%%%%%%%%%%%
        %6
        \draw[blue1, thick]
%        (-0.5,0) -- (-4.5,10)
        (0.5,0) -- (4.5,10)
        (9.5,0) -- (5.5,10)
%        (10.5,0) -- (14.5,10)
%        (19.5,0) -- (15.5,10)
        (20.5,0) -- (24.5,10)
        (29.5,0) -- (25.5,10)
%        (30.5,0) -- (34.5,10)
%        (39.5,0) -- (35.5,10)
        (40.5,0) -- (44.5,10)
        (49.5,0) -- (45.5,10)
%        (50.5,0) -- (54.5,10)
        ;
        %5
        \draw[blue1, thick]
%        (-4.5,10) -- (-0.5,20)
        (4.5,10) -- (0.5,20)
        (5.5,10) -- (9.5,20)
%        (14.5,10) -- (10.5,20)
%        (15.5,10) -- (19.5,20)
        (24.5,10) -- (20.5,20)
        (25.5,10) -- (29.5,20)
%        (34.5,10) -- (30.5,20)
%        (35.5,10) -- (39.5,20)
        (44.5,10) -- (40.5,20)
        (45.5,10) -- (49.5,20)
%        (54.5,10) -- (50.5,20)
        ;
        %4
        \draw[blue1, thick]
        (-0.5,20) -- (-4.5,30)
%        (0.5,20) -- (4.5,30)
%        (9.5,20) -- (5.5,30)
        (10.5,20) -- (14.5,30)
        (19.5,20) -- (15.5,30)
%        (20.5,20) -- (24.5,30)
%        (29.5,20) -- (25.5,30)
        (30.5,20) -- (34.5,30)
        (39.5,20) -- (35.5,30)
%        (40.5,20) -- (44.5,30)
%        (49.5,20) -- (45.5,30)
        (50.5,20) -- (54.5,30)
        ;
        %3
        \draw[blue1, thick]
        (-4.5,30) -- (-0.5,40)
%        (4.5,30) -- (0.5,40)
%        (5.5,30) -- (9.5,40)
        (14.5,30) -- (10.5,40)
        (15.5,30) -- (19.5,40)
%        (24.5,30) -- (20.5,40)
%        (25.5,30) -- (29.5,40)
        (34.5,30) -- (30.5,40)
        (35.5,30) -- (39.5,40)
%        (44.5,30) -- (40.5,40)
%        (45.5,30) -- (49.5,40)
        (54.5,30) -- (50.5,40)
        ;
        %2
        \draw[blue1, thick]
%        (-0.5,40) -- (-4.5,50)
        (0.5,40) -- (4.5,50)
        (9.5,40) -- (5.5,50)
%        (10.5,40) -- (14.5,50)
%        (19.5,40) -- (15.5,50)
        (20.5,40) -- (24.5,50)
        (29.5,40) -- (25.5,50)
%        (30.5,40) -- (34.5,50)
%        (39.5,40) -- (35.5,50)
        (40.5,40) -- (44.5,50)
        (49.5,40) -- (45.5,50)
%        (50.5,40) -- (54.5,50)
        ;
        %1
        \draw[blue1, thick]
%        (-4.5,50) -- (-0.5,60)
        (4.5,50) -- (0.5,60)
        (5.5,50) -- (9.5,60)
%        (14.5,50) -- (10.5,60)
%        (15.5,50) -- (19.5,60)
        (24.5,50) -- (20.5,60)
        (25.5,50) -- (29.5,60)
%        (34.5,50) -- (30.5,60)
%        (35.5,50) -- (39.5,60)
        (44.5,50) -- (40.5,60)
        (45.5,50) -- (49.5,60)
%        (54.5,50) -- (50.5,60)
        ;
        %left edge
        \draw[blue1, thick]
        (-6.25,0) -- (-6.25,60)
        ;
        %right edge
        \draw[blue1, thick]
        (56.25,0) -- (56.25,60)
        ;
        %bottom arcs
        \draw[blue1, thick]
        (-6.25,0) .. controls (-6.25,-7.5) and (-0.5,-7.5) .. (-0.5,0)
%        (0.5,0) .. controls (0.5,-7.5) and (9.5,-7.5) .. (9.5,0)
        (10.5,0) .. controls (10.5,-7.5) and (19.5,-7.5) .. (19.5,0)
%        (20.5,0) .. controls (20.5,-7.5) and (29.5,-7.5) .. (29.5,0)
        (30.5,0) .. controls (30.5,-7.5) and (39.5,-7.5) .. (39.5,0)
%        (40.5,0) .. controls (40.5,-7.5) and (49.5,-7.5) .. (49.5,0)
        (50.5,0) .. controls (50.5,-7.5) and (56.25, -7.5) .. (56.25,0)
        ;
        %top arcs
        \draw[blue1, thick]
        (-6.25,60) .. controls (-6.25,67.5) and (-0.5,67.5) .. (-0.5,60)
%        (0.5,60) .. controls (0.5,67.5) and (9.5,67.5) .. (9.5,60)
        (10.5,60) .. controls (10.5,67.5) and (19.5,67.5) .. (19.5,60)
%        (20.5,60) .. controls (20.5,67.5) and (29.5,67.5) .. (29.5,60)
        (30.5,60) .. controls (30.5,67.5) and (39.5,67.5) .. (39.5,60)
%        (40.5,60) .. controls (40.5,67.5) and (49.5,67.5) .. (49.5,60)
        (50.5,60) .. controls (50.5,67.5) and (56.25, 67.5) .. (56.25,60)
        ;
        %%%%%%%%%%%% comp2 %%%%%%%%%%%%%%%%%%%%
        %6
        \draw[blue2, thick]
        (-0.5,0) -- (-4.5,10)
%        (0.5,0) -- (4.5,10)
%        (9.5,0) -- (5.5,10)
        (10.5,0) -- (14.5,10)
        (19.5,0) -- (15.5,10)
%        (20.5,0) -- (24.5,10)
%        (29.5,0) -- (25.5,10)
        (30.5,0) -- (34.5,10)
        (39.5,0) -- (35.5,10)
%        (40.5,0) -- (44.5,10)
%        (49.5,0) -- (45.5,10)
        (50.5,0) -- (54.5,10)
        ;
        %5
        \draw[blue2, thick]
        (-4.5,10) -- (-0.5,20)
%        (4.5,10) -- (0.5,20)
%        (5.5,10) -- (9.5,20)
        (14.5,10) -- (10.5,20)
        (15.5,10) -- (19.5,20)
%        (24.5,10) -- (20.5,20)
%        (25.5,10) -- (29.5,20)
        (34.5,10) -- (30.5,20)
        (35.5,10) -- (39.5,20)
%        (44.5,10) -- (40.5,20)
%        (45.5,10) -- (49.5,20)
        (54.5,10) -- (50.5,20)
        ;
        %4
        \draw[blue2, thick]
%        (-0.5,20) -- (-4.5,30)
        (0.5,20) -- (4.5,30)
        (9.5,20) -- (5.5,30)
%        (10.5,20) -- (14.5,30)
%        (19.5,20) -- (15.5,30)
        (20.5,20) -- (24.5,30)
        (29.5,20) -- (25.5,30)
%        (30.5,20) -- (34.5,30)
%        (39.5,20) -- (35.5,30)
        (40.5,20) -- (44.5,30)
        (49.5,20) -- (45.5,30)
%        (50.5,20) -- (54.5,30)
        ;
        %3
        \draw[blue2, thick]
%        (-4.5,30) -- (-0.5,40)
        (4.5,30) -- (0.5,40)
        (5.5,30) -- (9.5,40)
%        (14.5,30) -- (10.5,40)
%        (15.5,30) -- (19.5,40)
        (24.5,30) -- (20.5,40)
        (25.5,30) -- (29.5,40)
%        (34.5,30) -- (30.5,40)
%        (35.5,30) -- (39.5,40)
        (44.5,30) -- (40.5,40)
        (45.5,30) -- (49.5,40)
%        (54.5,30) -- (50.5,40)
        ;
        %2
        \draw[blue2, thick]
        (-0.5,40) -- (-4.5,50)
%        (0.5,40) -- (4.5,50)
%        (9.5,40) -- (5.5,50)
        (10.5,40) -- (14.5,50)
        (19.5,40) -- (15.5,50)
%       (20.5,40) -- (24.5,50)
%        (29.5,40) -- (25.5,50)
        (30.5,40) -- (34.5,50)
        (39.5,40) -- (35.5,50)
%        (40.5,40) -- (44.5,50)
%        (49.5,40) -- (45.5,50)
        (50.5,40) -- (54.5,50)
        ;
        %1
        \draw[blue2, thick]
        (-4.5,50) -- (-0.5,60)
%        (4.5,50) -- (0.5,60)
%        (5.5,50) -- (9.5,60)
        (14.5,50) -- (10.5,60)
        (15.5,50) -- (19.5,60)
%        (24.5,50) -- (20.5,60)
%        (25.5,50) -- (29.5,60)
        (34.5,50) -- (30.5,60)
        (35.5,50) -- (39.5,60)
%        (44.5,50) -- (40.5,60)
%        (45.5,50) -- (49.5,60)
        (54.5,50) -- (50.5,60)
        ;
        %left edge
        \draw[blue2, thick]
%        (-6.25,0) -- (-6.25,60)
        ;
        %right edge
        \draw[blue2, thick]
%        (56.25,0) -- (56.25,60)
        ;
        %bottom arcs
        \draw[blue2, thick]
%        (-6.5,0) .. controls (-6.5,-7.5) and (-0.5,-7.5) .. (-0.5,0)
        (0.5,0) .. controls (0.5,-7.5) and (9.5,-7.5) .. (9.5,0)
%        (10.5,0) .. controls (10.5,-7.5) and (19.5,-7.5) .. (19.5,0)
        (20.5,0) .. controls (20.5,-7.5) and (29.5,-7.5) .. (29.5,0)
%        (30.5,0) .. controls (30.5,-7.5) and (39.5,-7.5) .. (39.5,0)
        (40.5,0) .. controls (40.5,-7.5) and (49.5,-7.5) .. (49.5,0)
%        (50.5,0) .. controls (50.5,-7.5) and (56.5, -7.5) .. (56.5,0)
        ;
        %top arcs
        \draw[blue2, thick]
%        (-6.5,60) .. controls (-6.5,67.5) and (-0.5,67.5) .. (-0.5,60)
        (0.5,60) .. controls (0.5,67.5) and (9.5,67.5) .. (9.5,60)
%        (10.5,60) .. controls (10.5,67.5) and (19.5,67.5) .. (19.5,60)
        (20.5,60) .. controls (20.5,67.5) and (29.5,67.5) .. (29.5,60)
%        (30.5,60) .. controls (30.5,67.5) and (39.5,67.5) .. (39.5,60)
        (40.5,60) .. controls (40.5,67.5) and (49.5,67.5) .. (49.5,60)
%        (50.5,60) .. controls (50.5,67.5) and (56.5, 67.5) .. (56.5,60)
        ;
        %%%%%%%%%%% boxes %%%%%%%%%%%%%%%%%%
        \node[box3](72) at (0,0){};
        \node[box3](74) at (10,0){};
        \node[box3](76) at (20,0){};
        \node[box3](78) at (30,0){};
        \node[box3](710) at (40,0){};
        \node[box3](712) at (50,0){};

        \node[box3](61) at (-5,10){};
        \node[box1](63) at (5,10){};
        \node[box2](65) at (15,10){};
        \node[box1](67) at (25,10){};
        \node[box2](69) at (35,10){};
        \node[box1](611) at (45,10){};
        \node[box3](613) at (55,10){};

        \node[box3](52) at (0,20){};
        \node[box3](54) at (10,20){};
        \node[box3](56) at (20,20){};
        \node[box3](58) at (30,20){};
        \node[box3](510) at (40,20){};
        \node[box3](512) at (50,20){};

        \node[box1](41) at (-5,30){};
        \node[box2](43) at (5,30){};
        \node[box1](45) at (15,30){};
        \node[box2](47) at (25,30){};
        \node[box1](49) at (35,30){};
        \node[box2](411) at (45,30){};
        \node[box1](413) at (55,30){};

        \node[box3](32) at (0,40){};
        \node[box3](34) at (10,40){};
        \node[box3](36) at (20,40){};
        \node[box3](38) at (30,40){};
        \node[box3](310) at (40,40){};
        \node[box3](312) at (50,40){};

        \node[box3](21) at (-5,50){};
        \node[box1](23) at (5,50){};
        \node[box2](25) at (15,50){};
        \node[box1](27) at (25,50){};
        \node[box2](29) at (35,50){};
        \node[box1](211) at (45,50){};
        \node[box3](213) at (55,50){};

        \node[box3](12) at (0,60){};
        \node[box3](14) at (10,60){};
        \node[box3](16) at (20,60){};
        \node[box3](18) at (30,60){};
        \node[box3](110) at (40,60){};
        \node[box3](112) at (50,60){};        
    \end{tikzpicture}
    
    \caption{Interweaving fishnets: a fishnet $L$ made up of black sublink $L_1$ and blue sublink $L_2$, with gray shaded boxes only determined mod~$2$. If $L_1$ and $L_2$ each admit maximal rank quotients (determined by the unshaded boxes), so does $L$. Independently, if $L$ satisfies the gcd condition Proposition~\ref{thm:fishnet-quotients} (determined by a subset of the shaded boxes), then $L$ admits a different maximal rank quotient.
    }
    \label{fig:interweaving}
\end{figure}
We start with two fishnets, $L_1$ and $L_2$, in bridge position with $b$ and $b-1$ bridges, respectively; they may have any number of rows\footnote{This may not be immediately apparent from Figure~\ref{fig:interweaving}. If one of the sublinks, $L_i$, has fewer rows than the other, we add rows of boxes labeled ``0" to $L_i$ until the heights of the two links match. Then $L_1$ and $L_2$ can be interweaved. A similar idea is presented in Figure~\ref{fig:almost-mrc}, in which the process described for $L_i$ is applied to the blue sublink.}. The two links are depicted, superimposed, in Figure~\ref{fig:interweaving}. $L_1$ is black, $L_2$ is  blue. When the diagrams are simply placed on top of each other, the gray boxes contain single crossings. We use this setup to construct new families of fishnets in $(2b-1)$-bridge position by changing the values of the parameters in the gray boxes. We study the existence of Coxeter quotients of Coxeter rank $2b-1$ for these fishnets.

As in observation~(\ref{obv: direct sum}) above, if sublinks $L_1$ and $L_2$ admit Coxeter quotients of rank $b$ and $b-1$, then $L$ admits a maximal rank Coxeter quotient given by the direct sum of these two. Unique to this arrangement is that $L$ may be constructed so that it admits a maximal rank quotient {\it regardless} of whether sublinks $L_1$ and $L_2$ do. Indeed, the gcd condition of Proposition~\ref{thm:fishnet-quotients} --- namely $d_j>1$ for all even $j$ --- depends only on parameters of $L$ which are in gray boxes in Figure~\ref{fig:interweaving}. These parameters are fixed mod~2 in order to maintain the sublink decomposition of $L$ into $L_1$ and $L_2$, but can always be chosen to satisfy this gcd condition in a way that preserves parity. Therefore, there is always a choice in the construction of $L$ which ensures $L_1 \cup L_2$ admits a maximal rank Coxeter quotient.

\section{Proof of Theorem~\ref{thm:sublink}} \label{sec:sublink-thm-proof}
This section presents the proof of Theorem~\ref{thm:sublink}. In Section~\ref{sec:fishnet-properties} we give some preliminaries; 
and in Section~\ref{sec:augmenting-L} we give the full construction.

\subsection{Component Number of Fishnets} \label{sec:fishnet-properties}

We begin with a discussion of the component number of a fishnet. This will be used to demonstrate that the links which satisfy the hypotheses of Proposition~\ref{thm:fishnet-quotients} can have any number of components between 1 and the bridge number.

Let $L_\mathfrak{t}$ denote a loose fishnet which is the plat closure of an $m$-stranded braid $\zeta_\mathfrak{t}$. Consider the homomorphism $\phi: B_m \twoheadrightarrow S_m$ from the braid group to the symmetric group given by $\phi: \sigma_i \mapsto (i~i{+}1)$. Using this map, we associate a permutation $\phi(\zeta_\mathfrak{t})$ to $L_\mathfrak{t}$ which can be used to obtain the component number of $L_\mathfrak{t}$. 

We will show that any component number between 1 and $\frac{m}{2}$ can be achieved by a fishnet of width $m$ satisfying a prescribed gcd condition on the parameters in even columns. Precisely, fix any even integer $m\geq 2$ and any set of odd integers $\delta_2, \delta_4 \dots, \delta_m$.  
The following lemma shows that there exists a transitive permutation in $S_m$ which is in the image of a fishnet $L_\mathfrak{t}$, whose parameter set $\mathfrak{t}$ has the property that $d_{2j}$ is divisible by $\delta_{2j}$ for all $j=1, 2,\dots, \frac{m}{2}$.

\begin{lemma}~\cite{Pfaff2024} \label{lem:comp-number}
    Fix $m \in \mathbb N$, and fix set $\Delta = \{\delta_2,\delta_4, \dots, \delta_{m{-}2}\}$ such that $\delta_{2j} \geq 3$ and $\delta_{2j} \equiv 1 \mod~2$ for all $\delta_{2j} \in \Delta$. Let $\mathcal L_\Delta$ be the set of strong fishnets $L_\mathfrak{t}$ defined by
    \[
    \mathcal L_\Delta := \left\{L_{\mathfrak{t}=(t_{1,2},t_{1,4}, \dots, t_{1,m{-}2}; \dots, t_{n,m-2})}\ \big\lvert\ |t_{i,j}|\geq 3 \ \forall i,j \text{ and $\delta_{2j}\mid t_{i,2j} \ \forall i \in \{1,3,\dots,n\}$}, n \in \mathbb N \right\}.
    \]
    Then for any $k \in \{1,2, \dots, \frac{m}{2}\}$, there exists a strong fishnet $L_{\mathfrak{t}_k} \in \mathcal L_\Delta$ which has $k$ components.
\end{lemma}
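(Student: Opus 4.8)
The plan is to reduce the statement to a purely combinatorial question about the plat permutation $\phi(\zeta_\mathfrak{t})\in S_m$ and then to exhibit, for each target $k$, an explicit parameter vector realizing it. The starting point is that the parity of an exponent controls the image of a twist region: since $\phi(\sigma_j^{t_{i,j}})=(j\ j{+}1)$ when $t_{i,j}$ is odd and equals the identity when $t_{i,j}$ is even, the permutation $\phi(\zeta_\mathfrak{t})$ depends only on the parities of the $t_{i,j}$. Crucially, inside the family $\mathcal L_\Delta$ these parities are completely free: because each $\delta_{2j}$ is odd and at least $3$, both $\delta_{2j}$ (odd) and $2\delta_{2j}$ (even) are admissible values for a parameter in an even column, each of absolute value $\geq 3$. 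Hence every permutation attainable by the fishnet pattern can be realized by a strong fishnet lying in $\mathcal L_\Delta$, and the problem becomes: choose parities so that the resulting plat closure has exactly $k$ components.

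First I would record how the component count is read off from $\phi(\zeta_\mathfrak{t})$. Write $a=(1\,2)(3\,4)\cdots(m{-}1\,m)$ for the involution recording the (standard) cap and cup matchings of the plat, and set $b=\phi(\zeta_\mathfrak{t})^{-1}\,a\,\phi(\zeta_\mathfrak{t})$ for the bottom matching transported to the top by the braid. Tracing the link through the braid strands and the closure arcs shows that its components are in bijection with the orbits of $\langle a,b\rangle$ acting on $\{1,\dots,m\}$. I would then make the explicit choice $n=1$, so that $\zeta_\mathfrak{t}$ uses only the even-column generators $\sigma_2,\sigma_4,\dots,\sigma_{m-2}$ and no odd-column parameters occur at all. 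Grouping the strands into the $\tfrac m2$ blocks $P_i:=\{2i{-}1,2i\}$, note that $a$ preserves each block, so the all-even choice $\phi(\zeta_\mathfrak{t})=\mathrm{id}$ yields exactly the $\tfrac m2$ orbits $P_1,\dots,P_{m/2}$, i.e. $k=\tfrac m2$ components; turning on an odd exponent in column $2i$ introduces the transposition $(2i\ 2i{+}1)$, which fuses block $P_i$ with block $P_{i+1}$.

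To hit a prescribed $k\in\{1,\dots,\tfrac m2\}$, I would set $S:=\{1,\dots,\tfrac m2-k\}$ and define $\mathfrak{t}_k$ (with $n=1$) by $t_{1,2i}=\delta_{2i}$ for $i\in S$ and $t_{1,2i}=2\delta_{2i}$ otherwise. All exponents are $\geq 3$ and divisible by the relevant $\delta_{2i}$, so $L_{\mathfrak{t}_k}\in\mathcal L_\Delta$ is a strong fishnet; note also that $d_{2i}\in\{\delta_{2i},2\delta_{2i}\}>1$, so it even meets the hypothesis of Proposition~\ref{thm:fishnet-quotients}. With this choice $\phi(\zeta_{\mathfrak{t}_k})=\prod_{i\in S}(2i\ 2i{+}1)$, and the blocks merge along the path graph on $P_1,\dots,P_{m/2}$ whose edge set is indexed by $S$. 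As a subgraph of a path is a forest, its number of connected components is (vertices) minus (edges), namely $\tfrac m2-|S|=k$, giving the desired component count; since $S$ ranges over subsets of size $0,1,\dots,\tfrac m2-1$ we realize every value of $k$ from $\tfrac m2$ down to $1$.

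The hard part will be the middle step: justifying rigorously that the orbits of $\langle a,b\rangle$ for $\phi(\zeta_{\mathfrak{t}_k})=\prod_{i\in S}(2i\ 2i{+}1)$ number exactly $\tfrac m2-|S|$. The subtlety is that $b=\phi(\zeta_{\mathfrak{t}_k})^{-1}a\,\phi(\zeta_{\mathfrak{t}_k})$ can pair points lying in \emph{non-adjacent} blocks (for instance, consecutive indices in $S$ produce a matching edge that skips a block), so the identification of orbits with connected components of the block path graph is not literal edge-by-edge and must be argued through the net connectivity. I expect the cleanest route is to track maximal runs of consecutive integers in $S$ and verify that each such run fuses precisely the corresponding consecutive blocks into a single orbit while distinct runs remain separate. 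A secondary, more routine point is pinning down the plat convention so that the top and bottom closures are indeed both the standard pairing $a$. Once these are in place, the construction produces, for every $k\in\{1,\dots,\tfrac m2\}$, a strong fishnet $L_{\mathfrak{t}_k}\in\mathcal L_\Delta$ with exactly $k$ components, as claimed.
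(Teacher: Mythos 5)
Your proof is correct, but it takes a genuinely different and more direct route than the paper's. The paper never computes orbit counts explicitly: it proves the stronger statement that $\phi(\zeta_\Delta)$ is \emph{all} of $S_m$, by exhibiting one fishnet in $\mathcal L_\Delta$ mapping to the $m$-cycle $(m\ m{-}1\ \cdots\ 1)$ (height $m{+}1$, odd entries along a diagonal) and one mapping to $(2\ 3)$ (a single row), and then defining a stacking operation $\mathfrak{r}\ast\mathfrak{s}$ --- inserting a buffer row of $4$'s between two fishnet diagrams --- that realizes multiplication in $S_m$ while staying inside $\mathcal L_\Delta$; the realizability of every component number is then deduced from surjectivity. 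You instead stay at height $n=1$, exploit the same parity observation (oddness of $\delta_{2j}$ makes both $\delta_{2j}$ and $2\delta_{2j}$ admissible, so parities are free in $\mathcal L_\Delta$), and directly count orbits of $\langle a,\pi a\pi\rangle$ for $\pi=\prod_{i\in S}(2i\ 2i{+}1)$. The step you flag as the hard part does go through, and for your specific choice it is easy: with $S=\{1,\dots,\tfrac m2-k\}$ a single run of consecutive indices, the $b$-edges on the block graph are $\{P_{j-1},P_{j+1}\}$ for $2\le j\le \tfrac m2-k$ together with the boundary edges $\{P_1,P_2\}$ and $\{P_{\frac m2-k},P_{\frac m2-k+1}\}$; the skip edges form two parity chains joined by the first edge, fusing $P_1,\dots,P_{\frac m2-k+1}$ into one orbit and leaving the remaining $k-1$ blocks untouched, so exactly $k$ components result. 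What each approach buys: yours is shorter, fully explicit, and proves the lemma as stated without any appeal to generation of $S_m$; the paper's proof yields more, namely fishnets of arbitrarily large height realizing \emph{every} plat permutation, which is what the proof of Corollary~\ref{cor:knots-too} actually leans on when it demands a one-component fishnet with $n\geq \frac{rm^2}{2}$. Your $n=1$ examples do not meet that height requirement as given, but the fix is immediate within your framework: insert additional rows whose entries are all even ($4$ in odd columns, $2\delta_{2j}$ in even columns); these are $\phi$-trivial, so the permutation, the membership in $\mathcal L_\Delta$, and hence the component count are unchanged while the height grows without bound. Your secondary point about the plat convention is also fine --- the paper's Figure~\ref{Fig:JohnsonMoriah} uses the standard pairing $(1\,2)(3\,4)\cdots(m{-}1\ m)$ at both top and bottom, matching your involution $a$.
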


\begin{proof}  
    For $L_\mathfrak{t} \in \mathcal L_\Delta$, denote by $\zeta_\mathfrak{t}$ its corresponding braid, given explicitly in Definition~\ref{def:fishnet}. Let $\zeta_\Delta := \{\zeta_\mathfrak{t} \mid L_\mathfrak{t}\in\mathcal L_\Delta\} \subset B_m$. Since the component number of $L_\mathfrak{t}$ is determined by its image under map $\phi: B_m \twoheadrightarrow S_m$ where $\phi(\sigma_i) = (i\ i{+}1)$ as above, it suffices to show that the image of $\zeta_\Delta$ under $\phi$ surjects onto $S_m$. To do this, we will find fishnets in $\mathcal L_\Delta$ which map to a generating set for $S_m$, and then realize multiplication in $S_m$ by combining fishnets in a concrete way, defined below.

    We will define $L_{\mathfrak{t}_1}, L_{\mathfrak{t}_2} \in \mathcal L_\Delta$ which correspond to an $n$-cycle and a transposition in $S_m$, respectively\footnote{One can as well show that every transposition $(i~i{+}1)$ in $S_m$ is in the image of $\mathcal{L}_\Delta$ under $\phi$. If $i$ is odd, it is a straightforward generalization of $\mathfrak{t}_2$ to define a fishnet with three rows mapping to $(i~i{+}1)$.}. Let $\mathfrak{t}_1$ be a parameter vector which defines a particular fishnet $L_{\mathfrak{t}_1}$ of width $m$ and height $m{+}1$ with odd entries along a diagonal and even entries elsewhere. %That is, for $i=j+1$, all $t_{1_{i,j}}$ are odd, equal to $\delta_j$ for $i$ even and equal to $3$ for $i$ odd; for $i \neq j+1$, all $t_{1_{i,j}}$ are even, equal to $2\delta_j$ for $i$ even and equal to $4$ for $i$ odd. 
    Specifically, we set:
    \begin{align*}
        \text{for } i=j+1,~~t_{i,j} =\begin{cases}
            \delta_j, & i \text{ odd} \\
            3, & i \text{ even}
        \end{cases};
        \text{ and for } i\neq j+1,~~t_{i,j} =\begin{cases}
            2\delta_j, & i \text{ odd} \\
            4, & i \text{ even}
        \end{cases}.
    \end{align*}
    Let $\mathfrak{t}_2 = (\delta_2, 2\delta_4, \dots, 2\delta_{m{-}2})$, defining a fishnet with a single row. The parameter vectors $\mathfrak{t}_1$ and $\mathfrak{t}_2$ satisfy the necessary conditions to ensure that $L_{\mathfrak{t}_1},L_{\mathfrak{t}_2} \in \mathcal L_\Delta$. Hence, $\zeta_{\mathfrak{t}_1}, \zeta_{\mathfrak{t}_2} \in \zeta_\Delta$. Additionally, $\phi(\zeta_{\mathfrak{t}_1}) = (m\ m{-}1\ \dots\ 1)$ and $\phi(\zeta_{\mathfrak{t}_2}) = (2\ 3)$, so $\phi(\zeta_{\mathfrak{t}_1})$ and $\phi(\zeta_{\mathfrak{t}_2})$ generate $S_m$.

    We now describe how to realize multiplication in $S_m$ by ``composing" fishnet diagrams. It is not possible to directly stack one fishnet of width $m$ on top of another due to the parity constraint on rows. Instead, we do the following. Let $\mathfrak{r} = (r_{1,2},r_{1,4}, \dots, r_{1,m{-}2}; \dots, r_{n,m-2})$ and $\mathfrak{s} = (s_{1,2},s_{1,4}, \dots, s_{1,m{-}2}; \dots, s_{n,m-2})$ be parameter vectors which determine fishnets $L_\mathfrak{r}$ and $L_\mathfrak{s}$ of width $m$. Their associated braids are denoted $\zeta_\mathfrak{r}, \zeta_\mathfrak{s}$. Then  $\phi(\zeta_\mathfrak{r})\phi(\zeta_\mathfrak{s})=\phi(\zeta_\mathfrak{r} \ast \zeta_\mathfrak{s})$ in $S_m$, where $\ast$ denotes composition in $B_m$.
    In order to compose the fishnets $L_\mathfrak{r}$ and $L_\mathfrak{s}$, we will stack the fishnet diagram of $L_\mathfrak{r}$ above the fishnet diagram of $L_\mathfrak{s}$ with an additional row of even parameters between them, resulting in the fishnet $L_{\mathfrak{r}\ast \mathfrak{s}}$, where $$\mathfrak{r}\ast \mathfrak{s} := (s_{1,2},s_{1,4}, \dots, s_{1,m{-}2}; \dots, s_{n,m-2};4,4,\dots,4;r_{1,2},r_{1,4}, \dots, r_{1,m{-}2}; \dots, r_{n,m-2}).$$
    Then $\phi(\zeta_{\mathfrak{r} \ast \mathfrak{s}}) = \phi(\zeta_\mathfrak{r})(1)\phi(\zeta_\mathfrak{s}) = \phi(\zeta_\mathfrak{r})\phi(\zeta_\mathfrak{s}) \in S_m$. By construction, all integer parameters in $L_{\mathfrak{r}\ast \mathfrak{s}}$ are at least 3, so $L_{\mathfrak{r}\ast \mathfrak{s}}$ is a strong fishnet, as desired. 

    Since there is a composition of fishnets associated to multiplication in $S_m$, we conclude from the surjectivity of $\phi$ that for every permutation in $S_m$, there exists a fishnet diagram in $\mathcal L_\Delta$ whose underlying braid word is mapped to this permutation by $\phi$. It is clear that the component number of a fishnet $L_\mathfrak{t}$ is determined by $\phi(\zeta_\mathfrak{t})$ and that, if every permutation of the strands can be achieved, then so can every component number. Since the image of $\zeta_\Delta$ under $\phi$ surjects onto $S_m$, we conclude that fishnets in $\mathcal L_\Delta$ achieve every  component number $k \in \{1,2,\dots,\frac{m}{2}\}$. 
\end{proof}
Note that every fishnet in the set $\mathcal L_\Delta$ satisfies the gcd condition in Proposition~\ref{thm:fishnet-quotients}, and so admits a maximal rank Coxeter quotient. A quotient admitted by all fishnets in $\mathcal L_\Delta$ is the Coxeter group given by
\[
G_\Delta =\left\langle a_1, a_2, \dots, a_{\frac{m}{2}}\ \big\lvert \ a_1^2=a_2^2\dots = a_{\frac{m}{2}}^2=1 \text{ and } (a_ja_{j+1})^{\delta_{2j}}=1 \text{ for all } j \in\{1,...,\tfrac{m}{2}{-}1\} \right\rangle.
\]
The group $G_\Delta$ is a quotient (possibly trivially so) of each of the Coxeter groups $G_\mathfrak{t}$, for $\mathfrak{t}$ such that $L_\mathfrak{t}\in\mathcal{L}_\Delta.$ (Here, $G_\mathfrak{t}$ is as defined in the proof of Proposition~\ref{thm:fishnet-quotients} in terms of gcds of integers in $\mathfrak{t}$.) By application of Lemma~\ref{lem:comp-number}, the set of fishnets of width $m$ which admit $G_\Delta$ as a quotient contains fishnets of every component number from $1$ to $\frac{m}{2}$.

\subsection{Proof of Theorem~\ref{thm:sublink}} \label{sec:augmenting-L}

Let $L$ be a link in bridge position with $b$ bridges, presented in a diagram $D$ with $b$ local maxima with respect to the $y$-axis in the plane of projection. Note that we do not require $b=\beta(L)$ and indeed our argument applies to any link in any bridge position, except the case where $L$ is an unknot and $b=\beta(L)=1$. 

Our proof can be split into two steps. First, we use $D$ to construct a new diagram, $D'$, of $L$ such that $D'$ is a loose fishnet of width $2b$. We note that in most cases the process of obtaining $D'$ increases the crossing number. The second step is to interweave $U,$ an unknot in bridge position with $b-1$ bridges, through $D'$ to obtain the desired link $L\cup U.$

{\it Step one.} A sequence of planar isotopies and Reidemeister II moves (potentially increasing the crossing number) transforms $D$ into a diagram $D_1$ which is the plat closure of a braid. Moreover, we can assume that all local maxima of $D_1$ are at the same height, 
say $y=a$, and similarly, that all local minima are at the same height, $y=b$. We require that all crossings be at different heights between $a$ and $b$, so that we can read off a braid word $\zeta^\ast=\sigma_{i_1}^{r_1}\sigma_{i_2}^{r_2}\dots \sigma_{i_k}^{r_k}$ from $D_1$. 
Here each $i_j\in\{1, 2, \dots, 2b{-}1\}$ and $\sigma_{i_j}$ denotes the corresponding generator of the braid group. Of course, $\zeta^\ast$ is not uniquely determined by $D$. We assume that $\zeta^\ast$ is reduced in the sense that $i_j\neq i_{j+1}$ for all $j.$

In our construction, for each $j=1, 2, \dots, k{-}1,$  the term $\sigma_{i_j}^{r_j}$ in $\zeta^\ast$ will take up two rows in a loose fishnet diagram of $L$. The last term, $\sigma_{i_k}^{r_k}$, will take up a single row. As a result, there will be $2k-1$ rows in the diagram constructed, of which $k-1$ rows will consist entirely of zeroes. Clearly, when $k>1$ the fishnet produced in this manner will not have minimal height. This inefficiency does not increase the bridge number of $L\cup  U$, so we do not strive for minimality.

Consider $\sigma_{i_1}^{r_1}$. It will contribute a box labeled $r_1$ in column $i_1$. This box will be either in row $1$ or in row $2,$ depending on the parity of $i_1$. The remaining parameters in the first two rows will be zero. We proceed in the same manner for all $j<k$. That is, $\sigma_{i_j}^{r_j}$ determines rows $2j-1$ and $2j$. Again, all entries in these rows are zero, except for the entry in column $i_j,$ which is equal to $r_j$. 

Without loss of generality, we may assume that $i_k$ is even. (If it is odd, then the plat closures of $\sigma_{i_1}^{r_1}\sigma_{i_2}^{r_2}\dots \sigma_{i_k}^{r_k}$ and $\sigma_{i_1}^{r_1}\sigma_{i_2}^{r_2}\dots \sigma_{i_{k-1}}^{r_{k{-}1}}$ represent the same link.) The last, $(2k-1)$-th, row of $D'$ will then consist of $r_k$ in the $i_k$-th column and zeroes in the other columns. We do not follow this by a row of zeroes. This completes the construction of a loose fishnet diagram $D'$ for $L.$

{\it Step two.} Having presented $L$ as a loose fishnet, we now let the diagram $D'$ play the role of the black sublink in Figure~\ref{fig:almost-mrc}. (This is the step where our construction uses the assumption $b>1.$) One possible way of choosing the desired unknot $U\subset S^3\backslash L$ is pictured in blue. In this construction, the link $L\cup U$ satisfies the hypotheses of Proposition~\ref{thm:fishnet-quotients} with $d_{2j}=3$ for all $j\in \{1, 2,\dots \frac{m}{2}{-}1\}$.\footnote{It is evident in the figure that, for any choice of odd integers $\delta_{2j}$, we can just as easily embed $U$ in $S^3\backslash L$ so that $\delta_{2j}\mid d_{2j}$. This allows us to construct different quotients, depending on the choice of $U$.} Therefore, $L\cup U$ admits as a quotient the Coxeter group 
\[
G_{2b}:=\left\langle a_1, a_2, \dots, a_{\frac{m}{2}}\ \big\lvert \ a_1^2=a_2^2\dots = a_{\frac{m}{2}}^2=1 \text{ and } (a_ja_{j+1})^{3}=1 \text{ for all } j \in\{1,...,\tfrac{m}{2}{-}1\} \right\rangle.
\]
This suffices to establish the equality 
\begin{equation} \label{eq:almost-mrc}
\beta(L\cup U)=\mu(L\cup U)= \frac{m}{2}=2b-1.    
\end{equation}
To determine the {\it rank} of $\pi_1(S^3\backslash(L\cup U)),$ we consider a further quotient of $G_{2b}$ (and hence of the group of $L\cup U$), defined as follows:
\[
G_{2b}':=\left\langle a_1, a_2, \dots, a_{\frac{m}{2}}\ \big\lvert \ a_1^2=a_2^2\dots = a_{\frac{m}{2}}^2=1 \text{ and } (a_ia_{j})^{3}=1 \text{ for all } i\neq j \right\rangle.
\]
The proof of~\cite[Lemma~3.2(c)]{kaufmann1992rank} applies without modification to show that $\text{rank}(G_{2b}')=\frac{m}{2}$. This then implies that $\text{rank}(\pi_1(S^3\backslash(L\cup U)))=\mu(L\cup U) = \frac{m}{2},$ concluding the proof.\qed

\begin{remark} 
By letting all twist regions between $L$ and $U$ contain $\pm3$ crossings, we have arranged that $G_{2b}$ admits the symmetric group $S_{2b}$ as a further quotient. An explicit quotient 
\[ 
\pi_1(S^3\backslash(L\cup U))\twoheadrightarrow S_{2b},
\]
mapping meridians to transpositions, is given in Figure~\ref{fig:almost-mrc}. This quotient suffices to establish MRC for $L\cup U$ without appealing Proposition~\ref{thm:fishnet-quotients} or the more general theory of Coxeter groups.
\end{remark}

\begin{remark}
Given a link $L\subset S^3$, let $E(L)$ denote its exterior. Recall that the tunnel number of $L$, denoted $t(L)$, is the minimal number of arcs properly embedded in $E(L)$ such that the exterior of the arcs in $E(L)$ is a handlebody. By definition, $t(L)=g(E(L))-1$, where $g(E(L))$ is the Heegaard genus of $E(L)$. One can show $g(E(L))\geq {\rm rank}(\pi_1(S^3\backslash L))$ by an application of the Seifert-Van Kampen theorem to any minimal Heegaard splitting of $E(L)$. Additionally, we have that $\mu(L)\geq {\rm rank}(\pi_1(S^3\backslash L))$ by the definition of meridional rank, and that $\beta(L)\geq g(E(L))$ since any $b$-bridge splitting of $L$ gives a genus $b$ Heegaard splitting for $E(L)$. 
Putting these together, we have $\beta(L)\geq g(E(L))= t(L)+1\geq {\rm rank}(\pi_1(S^3\backslash L)).$ 
%$rank(E(L))\leq t(L)+1\leq \mu(L) \leq \beta(L)$. 
Therefore, the links $L\cup U$ obtained by our construction have the property that
\[\beta(L\cup U)=\mu(L\cup U)= t(L\cup U)+1 ={\rm rank}(\pi_1(S^3\backslash(L\cup U)))=2b-1.\]
\end{remark}

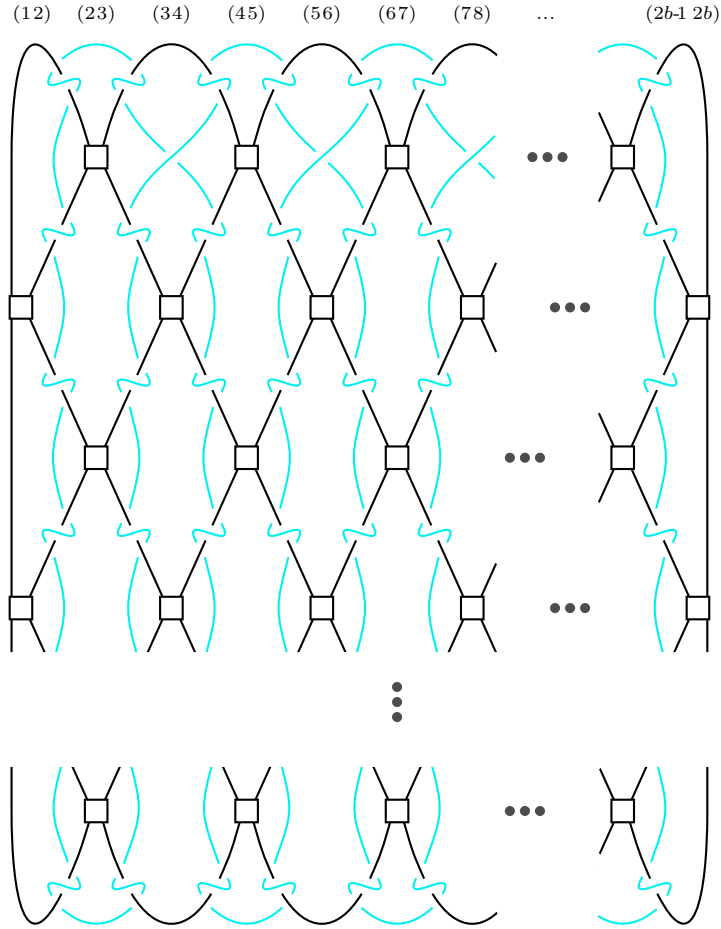
\begin{figure}[ht]
    \centering
    \begin{tikzpicture}
        [x=1mm,y=1mm,
        every path/.style = {thick},
        box/.style={rectangle, inner sep=0mm, draw=black, fill=white, thick, minimum width=3mm, minimum height=3mm}]
        \definecolor{unknot}{HTML}{00F0F0}

        %%%%%%% MAIN %%%%%%%%%%%%%%%
        %%%%%%%%%%%%%%%%%%%%%%%%%%%%%%%%%%
        \draw (-1.25,10) -- (-1.25,90);
        \draw (81.25,10) -- (81.25,90);
        %%%%%%%%%%%%%% unknot %%%%%%%%%%%%%%%
        \begin{scope}[unknot]
            \foreach \x in {10,30,50,70}{
            \foreach \y in {0}{
                \draw (\x,\y) +(-5,0) .. controls +(2.05,-6.65) and +(-2.05,-6.65) .. ++(5,0);
            }
            \foreach \y in {10}{
                \draw (\x,\y) +(-5,10) .. controls +(-2,-7.5) and +(-2,7.5) .. ++(-5,-10);
                \draw (\x,\y) +(5,10) .. controls +(2,-7.5) and +(2,7.5) .. ++(5,-10);
            }
            \foreach \y in {30}{
                \draw (\x,\y) +(-5.75,7.5) .. controls +(2,-7.5) and +(2,7.5) .. ++(-5.75,-7.5);
                \draw (\x,\y) +(5.75,7.5) .. controls +(-2,-7.5) and +(-2,7.5) .. ++(5.75,-7.5);
            }
            %\foreach \y in {50}{
                %\draw (\x,\y) +(-5,10) .. controls +(-2,-7.5) and +(-2,7.5) .. ++(-5,-10);
                %\draw (\x,\y) +(5,10) .. controls +(2,-7.5) and +(2,7.5) .. ++(5,-10);
            %}
            \foreach \y in {70}{
                \draw (\x,\y) +(-5.75,7.5) .. controls +(2,-7.5) and +(2,7.5) .. ++(-5.75,-7.5);
                \draw (\x,\y) +(5.75,7.5) .. controls +(-2,-7.5) and +(-2,7.5) .. ++(5.75,-7.5);
            }
            \foreach \y in {100}{
                \draw (\x,\y) +(-4.8,2.8) .. controls +(3.2,2.92) and +(-3.2,2.92) .. ++(4.8,2.8);
            }
        }
        \foreach \x in {10,30,50}{
            \foreach \y in {50}{
                \draw (\x,\y) +(-4.25,7.5) .. controls +(-2,-7.5) and +(-2,7.5) .. ++(-4.25,-7.5);
                \draw (\x,\y) +(4.25,7.5) .. controls +(2,-7.5) and +(2,7.5) .. ++(4.25,-7.5);
            }
        }
        \foreach \y in {90}{
            \draw (10,\y) +(-3.5,7.5) .. controls +(-2.75,-7.5) and +(-2,7.5) .. ++(-4.25,-7.5);
            \draw (70,\y) +(5,10) .. controls +(2,-7.5) and +(2,7.5) .. ++(5,-10);
            \foreach \x in {20,40,60}{
                \draw (\x,\y) +(-6.85,8.5) .. controls +(4.1,-8.5) and +(-3,7.5) .. ++(5.75,-7.5);
                \draw[white, fill=white] (\x,\y) circle (1mm);
                \draw (\x,\y) +(6.85,8.5) .. controls +(-4.1,-8.5) and +(3,7.5) .. ++(-5.75,-7.5);
            }  
        }
        \end{scope}
        %%%%%%% squiggles %%%%%%%%%%%
        \foreach \x in {5,25,45}{
            \foreach \y in {40,80}{
                \draw[unknot] (\x,\y) +(1.5,1.5) .. controls +(3,-5) and +(-3,5) .. ++(-1.5,-1.5);
                \draw[white, fill=white] (\x,\y) +(1,2.2) circle (1mm);
                \draw[white, fill=white] (\x,\y) +(-1,-2.2) circle (1mm);
            }
            \foreach \y in {60}{
                \draw[unknot] (\x,\y) +(-1.5,1.5) .. controls +(-3,-5) and +(3,5) .. ++(1.5,-1.5);
                \draw[white, fill=white] (\x,\y) +(-1,2.2) circle (1mm);
                \draw[white, fill=white] (\x,\y) +(1,-2.2) circle (1mm);
            }
        }
        \foreach \x in {15,35,55}{
            \foreach \y in {40,80}{
                \draw[unknot] (\x,\y) +(-1.5,1.5) .. controls +(-3,-5) and +(3,5) .. ++(1.5,-1.5);
                \draw[white, fill=white] (\x,\y) +(-1,2.2) circle (1mm);
                \draw[white, fill=white] (\x,\y) +(1,-2.2) circle (1mm);
            }
            \foreach \y in {60}{
                \draw[unknot] (\x,\y) +(1.5,1.5) .. controls +(3,-5) and +(-3,5) .. ++(-1.5,-1.5);
                \draw[white, fill=white] (\x,\y) +(1,2.2) circle (1mm);
                \draw[white, fill=white] (\x,\y) +(-1,-2.2) circle (1mm);
            }
        }
        \foreach \x in {5.75}{%top row%
            \foreach \y in {100}{
                \draw[unknot] (\x,\y) +(-1.5,1.5) .. controls +(-3,-5) and +(3,5) .. ++(1.5,-1.5);
                \draw[white, fill=white] (\x,\y) +(-1,2.2) circle (1mm);
                \draw[white, fill=white] (\x,\y) +(1,-2.2) circle (1mm);
            }
        }
        \foreach \x in {25.7,45.7}{
            \foreach \y in {100}{
                \draw[unknot] (\x,\y) +(-1.5,1.5) .. controls +(-3,-5) and +(3,5) .. ++(1.5,-1.5);
                \draw[white, fill=white] (\x,\y) +(-1.15,2) circle (1mm);
                \draw[white, fill=white] (\x,\y) +(1.15,-2) circle (1mm);
            }
        }
        \begin{scope}[xshift=-0.7mm]
        \foreach \x in {15,35,55}{
            \foreach \y in {100}{
                \draw[unknot] (\x,\y) +(1.5,1.5) .. controls +(3,-5) and +(-3,5) .. ++(-1.5,-1.5);
                \draw[white, fill=white] (\x,\y) +(1.15,2) circle (1mm);
                \draw[white, fill=white] (\x,\y) +(-1.15,-2) circle (1mm);
            }
        }
        \end{scope}%top row
        %%%%%%%% fishnet %%%%%%%%%%%%%%%
        \foreach \y in {10}{
            \foreach \x in {0}{
                \draw (\x,\y) +(-1.25,0) .. controls +(0,-20) and +(-6,-20) .. ++(9.5,0);
            }
            \foreach \x in {10,30,50}{
                \draw (\x,\y) +(0.5,0) .. controls +(5,-20) and +(-5,-20) .. ++(19.5,0);
            }
            \foreach \x in {70}{
                \draw (\x,\y) +(0.5,0) .. controls +(6,-20) and +(0,-20) .. ++(11.25,0);
            }
            \foreach \x in {10,30,50,70}{
                \draw (\x,\y) +(0.5,0) -- ++(5,10);
                \draw (\x,\y) +(-0.5,0) -- ++(-5,10);
                \node[box] at (\x,\y){};
            }
        }
        \foreach \y in {30}{
            \foreach \x in {0,20,40,60}{
                \draw (\x,\y) +(0.5,0) -- ++(4.55,9);
                \draw (\x,\y) +(0.5,0) -- ++(5,-10);
                \draw (\x,\y) +(19.5,0) -- ++(15.45,9);
                \draw (\x,\y) +(19.5,0) -- ++(15,-10);
            }
            \foreach \x in {0,20,40,60,80}{
                \node[box] at (\x,\y){};
            }
        }
        \foreach \y in {50}{
            \foreach \x in {10,30,50,70}{
                \draw (\x,\y) +(0.5,0) -- ++(4.55,9);
                \draw (\x,\y) +(-0.5,0) -- ++(-4.55,9);
                \draw (\x,\y) +(0.5,0) -- ++(4.55,-9);
                \draw (\x,\y) +(-0.5,0) -- ++(-4.55,-9);
                \node[box] at (\x,\y){};
            }
        }
        \foreach \y in {70}{
            \foreach \x in {0,20,40,60}{
                \draw (\x,\y) +(0.5,0) -- ++(4.55,9);
                \draw (\x,\y) +(0.5,0) -- ++(4.55,-9);
                \draw (\x,\y) +(19.5,0) -- ++(15.45,9);
                \draw (\x,\y) +(19.5,0) -- ++(15.45,-9);
            }
            \foreach \x in {0,20,40,60,80}{
                \node[box] at (\x,\y){};
            }
        }
        \foreach \y in {90}{
            \foreach \x in {0}{
                \draw (\x,\y) +(-1.25,0) .. controls +(0,11.5) and +(-4.8,9.5) .. ++(5.4,11);%**********
                \draw (\x,\y) +(6.35,9) .. controls +(1.5,-3.5) .. ++(9.5,0);%*****
            }
            \foreach \x in {10,30,50}{
                %\draw (\x,\y) +(0.5,0) .. controls +(5,20) and +(-5,20) .. ++(19.5,0);
                \draw (\x,\y) +(3.6,9) .. controls +(-1.55,-3.5) .. ++(0.5,0);%**********
                \draw (\x+20,\y) +(-3.6,9) .. controls +(1.55,-3.5) .. ++(-0.5,0);%**********
                \draw (\x,\y) +(4.7,11) .. controls +(3.25,5.35) and +(-3.25,5.35) .. ++(20-4.7,11);%******
            }
            \foreach \x in {70}{
                \draw (\x,\y) +(0.5,0) .. controls +(6,20) and +(0,20) .. ++(11.25,0);
            }
            \foreach \x in {10,30,50,70}{
                \draw (\x,\y) +(0.5,0) -- ++(4.55,-9);
                \draw (\x,\y) +(-0.5,0) -- ++(-4.55,-9);
                \node[box] at (\x,\y){};
            }
        }
        \draw[white, fill=white] (63.5,-10) rectangle (100,110);

        %%%%%%%% RIGHT EDGE %%%%%%%%%%%%%%%%
        %%%%%%%%%%%%%%%%%%%%%%%%%%%%%%%%%%%%%
        \begin{scope}[xshift=10mm]
        \draw (81.25,10) -- (81.25,90);
        %%%%%%%%%%%%%%%% unknot %%%%%%%%%%%%%
        \begin{scope}[unknot]
            \foreach \x in {70}{
            \foreach \y in {0}{
                \draw (\x,\y) +(-5,0) .. controls +(2.05,-6.65) and +(-2.05,-6.65) .. ++(5,0);
            }
            \foreach \y in {10}{
                \draw (\x,\y) +(-5,10) .. controls +(-2,-7.5) and +(-2,7.5) .. ++(-5,-10);
                \draw (\x,\y) +(5,10) .. controls +(2,-7.5) and +(2,7.5) .. ++(5,-10);
            }
            \foreach \y in {30}{
                %\draw (\x,\y) +(-5,10) .. controls +(2,-7.5) and +(2,7.5) .. ++(-5,-10);
                \draw (\x,\y) +(5.75,7.5) .. controls +(-2,-7.5) and +(-2,7.5) .. ++(5.75,-7.5);
            }
            \foreach \y in {50}{
                \draw (\x,\y) +(-5,10) .. controls +(-2,-7.5) and +(-2,7.5) .. ++(-5,-10);
                \draw (\x,\y) +(4.25,7.5) .. controls +(2,-7.5) and +(2,7.5) .. ++(4.25,-7.5);
            }
            \foreach \y in {70}{
                %\draw (\x,\y) +(-5,10) .. controls +(2,-7.5) and +(2,7.5) .. ++(-5,-10);
                \draw (\x,\y) +(5.75,7.5) .. controls +(-2,-7.5) and +(-2,7.5) .. ++(5.75,-7.5);
            }
            \foreach \y in {100}{
                \draw (\x,\y) +(-4.8,2.8) .. controls +(3.2,2.92) and +(-3.2,2.92) .. ++(4.8,2.8);
            }
        }
        \foreach \y in {90}{
            \draw (70,\y) +(3.5,7.5) .. controls +(2.75,-7.5) and +(2,7.5) .. ++(4.25,-7.5);
        }
        \end{scope}
        %%%%%%% squiggles %%%%%%%%%%%
        \foreach \x in {75}{
            \foreach \y in {40,80}{
                \draw[unknot] (\x,\y) +(-1.5,1.5) .. controls +(-3,-5) and +(3,5) .. ++(1.5,-1.5);
                \draw[white, fill=white] (\x,\y) +(-1,2.2) circle (1mm);
                \draw[white, fill=white] (\x,\y) +(1,-2.2) circle (1mm);
            }
            \foreach \y in {60}{
                \draw[unknot] (\x,\y) +(1.5,1.5) .. controls +(3,-5) and +(-3,5) .. ++(-1.5,-1.5);
                \draw[white, fill=white] (\x,\y) +(1,2.2) circle (1mm);
                \draw[white, fill=white] (\x,\y) +(-1,-2.2) circle (1mm);
            }
        }
        \foreach \x in {74.25}{%top row%
            \foreach \y in {100}{
                \draw[unknot] (\x,\y) +(1.5,1.5) .. controls +(3,-5) and +(-3,5) .. ++(-1.5,-1.5);
                \draw[white, fill=white] (\x,\y) +(1,2.2) circle (1mm);
                \draw[white, fill=white] (\x,\y) +(-1,-2.2) circle (1mm);
            }
        }
        %%%%%% fishnet %%%%
        \foreach \y in {10}{
            \foreach \x in {70}{
                \draw (\x,\y) +(0.5,0) .. controls +(6,-20) and +(0,-20) .. ++(11.25,0);
                \draw (\x,\y) +(0.5,0) -- ++(4.55,9);
                \draw (\x,\y) +(-0.5,0) -- ++(-4.55,9);
                \draw (\x,\y) +(-0.5,0) -- ++(-4.55,-9);
                \node[box] at (\x,\y){};
            }
        }
        \foreach \y in {30}{
            \foreach \x in {60}{
                \draw (\x,\y) +(0.5,0) -- ++(4.55,9);
                \draw (\x,\y) +(0.5,0) -- ++(4.55,-9);
                \draw (\x,\y) +(19.5,0) -- ++(15.45,9);
                \draw (\x,\y) +(19.5,0) -- ++(15.45,-9);
            }
            \foreach \x in {60,80}{
                \node[box] at (\x,\y){};
            }
        }
        \foreach \y in {50}{
            \foreach \x in {70}{
                \draw (\x,\y) +(0.5,0) -- ++(4.55,9);
                \draw (\x,\y) +(-0.5,0) -- ++(-4.55,9);
                \draw (\x,\y) +(0.5,0) -- ++(4.55,-9);
                \draw (\x,\y) +(-0.5,0) -- ++(-4.55,-9);
                \node[box] at (\x,\y){};
            }
        }
        \foreach \y in {70}{
            \foreach \x in {60}{
                \draw (\x,\y) +(0.5,0) -- ++(4.55,9);
                \draw (\x,\y) +(0.5,0) -- ++(5,-9);
                \draw (\x,\y) +(19.5,0) -- ++(15.45,9);
                \draw (\x,\y) +(19.5,0) -- ++(15.45,-9);
            }
            \foreach \x in {60,80}{
                \node[box] at (\x,\y){};
            }
        }
        \foreach \y in {90}{
            \foreach \x in {70}{
                \draw (\x+10,\y) +(1.25,0) .. controls +(0,11.5) and +(4.8,9.5) .. ++(-5.4,11);%**********
                \draw (\x+10,\y) +(-6.35,9) .. controls +(-1.5,-3.5) .. ++(-9.5,0);%*****
                \draw (\x,\y) +(0.5,0) -- ++(4.55,-9);
                \draw (\x,\y) +(-0.5,0) -- ++(-4.55,-9);
                \draw (\x,\y) +(-0.5,0) -- ++(-4.55,9);
                \node[box] at (\x,\y){};
            }
        }
        \draw[white, fill=white] (57.5,-10) rectangle (66.5,110);
        \draw[white, fill=white] (-15,-10) rectangle (85,24);
        \end{scope}
        
        %%%%%%%%%% BOTTOM %%%%%%%%%%%%%%%%
        %%%%%%%%%%%%%%%%%%%%%%%%%%%%%%%%%
        \begin{scope}[yshift=-7mm]
        \draw (-1.25,10) -- (-1.25,20);
        \draw (91.25,10) -- (91.25,20);
        %%%%%%%%%%%%%%%% unknot %%%%%%%%%%%%%
        \begin{scope}[unknot]
            \foreach \x in {10,30,50,80}{
            \foreach \y in {0}{
                \draw (\x,\y) +(-4.8,-2.8) .. controls +(3.2,-2.92) and +(-3.2,-2.92) .. ++(4.8,-2.8);
            }
            \foreach \y in {10}{
                \draw (\x,\y) +(-4.25,7.5) .. controls +(-2,-7.5) and +(-2.75,7.5) .. ++(-3.5,-7.5);
                \draw (\x,\y) +(4.25,7.5) .. controls +(2,-7.5) and +(2.75,7.5) .. ++(3.5,-7.5);
            }
        }
        \end{scope}
        %%%%%%%%% squiggles %%%%%%%%%%%
        \foreach \x in {5.75}{%top row%
            \foreach \y in {0}{
                \draw[unknot] (\x,\y) +(1.5,1.5) .. controls +(3,-5) and +(-3,5) .. ++(-1.5,-1.5);
                \draw[white, fill=white] (\x,\y) +(1,2.2) circle (1mm);
                \draw[white, fill=white] (\x,\y) +(-1,-2.2) circle (1mm);
            }
        }
        \foreach \x in {25.7,45.7}{
            \foreach \y in {0}{
                \draw[unknot] (\x,\y) +(1.5,1.5) .. controls +(3,-5) and +(-3,5) .. ++(-1.5,-1.5);
                \draw[white, fill=white] (\x,\y) +(1.15,2) circle (1mm);
                \draw[white, fill=white] (\x,\y) +(-1.15,-2) circle (1mm);
            }
        }
        \begin{scope}[xshift=-0.7mm]
        \foreach \x in {15,35,55}{
            \foreach \y in {0}{
                \draw[unknot] (\x,\y) +(-1.5,1.5) .. controls +(-3,-5) and +(3,5) .. ++(1.5,-1.5);
                \draw[white, fill=white] (\x,\y) +(-1.15,2) circle (1mm);
                \draw[white, fill=white] (\x,\y) +(1.15,-2) circle (1mm);
            }
        }
        \end{scope}%top row
        \foreach \x in {84.25}{%top row%
            \foreach \y in {0}{
                \draw[unknot] (\x,\y) +(-1.5,1.5) .. controls +(-3,-5) and +(3,5) .. ++(1.5,-1.5);
                \draw[white, fill=white] (\x,\y) +(-1,2.2) circle (1mm);
                \draw[white, fill=white] (\x,\y) +(1,-2.2) circle (1mm);
            }
        }
        %% fishnet %%
        \foreach \y in {10}{
            \foreach \x in {0}{
                %\draw (\x,\y) +(-1.25,0) .. controls +(0,-20) and +(-6,-20) .. ++(9.5,0);
                \draw (\x,\y) +(-1.25,0) .. controls +(0,-11.5) and +(-4.8,-9.5) .. ++(5.4,-11);%**********
                \draw (\x,\y) +(6.35,-9) .. controls +(1.5,3.5) .. ++(9.5,0);%*****
            }
            \foreach \x in {10,30,50}{
                %\draw (\x,\y) +(0.5,0) .. controls +(5,-20) and +(-5,-20) .. ++(19.5,0);
                \draw (\x,\y) +(3.6,-9) .. controls +(-1.55,3.5) .. ++(0.5,0);%**********
                \draw (\x+20,\y) +(-3.6,-9) .. controls +(1.55,3.5) .. ++(-0.5,0);%**********
                \draw (\x,\y) +(4.7,-11) .. controls +(3.25,-5.35) and +(-3.25,-5.35) .. ++(20-4.7,-11);%******
            }
            \foreach \x in {90}{
                \draw (\x,\y) +(1.25,0) .. controls +(0,-11.5) and +(4.8,-9.5) .. ++(-5.4,-11);%**********
                \draw (\x,\y) +(-6.35,-9) .. controls +(-1.5,3.5) .. ++(-9.5,0);%*****
            }
            \foreach \x in {10,30,50}{
                \draw (\x,\y) +(0.5,0) -- ++(5,10);
                \draw (\x,\y) +(-0.5,0) -- ++(-5,10);
                \node[box] at (\x,\y){};
            }
            \foreach \x in {80}{
                \draw (\x,\y) +(0.5,0) -- ++(5,10);
                \draw (\x,\y) +(-0.5,0) -- ++(-5,10);
                \draw (\x,\y) +(-0.5,0) -- ++(-5,-10);
                \node[box] at (\x,\y){};
            }
        }
        \draw[white, fill=white] (-5,16) rectangle (95,20);
        \draw[white, fill=white] (63.5,-5) rectangle (76.5,20);
        \end{scope}
        %%%%%%%%%%%%%%%%%%%%%%%%%%%%%%%%%%%%%
        % dot dot dots
        \foreach \i in {-1,0,1}{
            \draw[black!70, fill=black!70] (50,2*\i+17.5) circle (1.5pt);
            \draw[black!70, fill=black!70] (2*\i+70,90) circle (1.5pt);
            \draw[black!70, fill=black!70] (2*\i+73,70) circle (1.5pt);
            \draw[black!70, fill=black!70] (2*\i+67,50) circle (1.5pt);
            \draw[black!70, fill=black!70] (2*\i+73,30) circle (1.5pt);
            \draw[black!70, fill=black!70] (2*\i+67,3) circle (1.5pt);
        }
        %touching up the edges :)
        \draw[white, fill=white]
        (64,-10) circle (.75mm)
        (63.6,36.7) circle (.75mm)
        (63.6,63.3) circle (.75mm)
        (63.6,76.7) circle (.75mm)
        (63.6,86.5) circle (.75mm)
        (63.6,93.5) circle (.75mm)
        (64,103) circle (.75mm)
        ;
        \draw[white, fill=white]
        (76,-10.4) circle (.75mm)
        (76.4,-3.4) circle (.75mm)
        (76.4,9.3) circle (.75mm)
        (76.4,43.3) circle (.75mm)
        (76.4,56.7) circle (.75mm)
        (76.4,83.3) circle (.75mm)
        (76.4,96.7) circle (.75mm)
        (76,103.4) circle (.75mm)
        ;
        %%%% LABELS %%%%%%%%%%%%%%
        \node at (1.5,109){$\scriptstyle(1\!\!\ 2)$};
        \node at (10,109){$\scriptstyle(2\!\!\ 3)$};
        \node at (20,109){$\scriptstyle(3\!\!\ 4)$};
        \node at (30,109){$\scriptstyle(4\!\!\ 5)$};
        \node at (40,109){$\scriptstyle(5\!\!\ 6)$};
        \node at (50,109){$\scriptstyle(6\!\!\ 7)$};
        \node at (60,109){$\scriptstyle(7\!\!\ 8)$};
        \node at (70,108.5){$\scriptstyle\dots$};
        %\node at (80,109){$\scriptstyle(2b\text{-}\!2\!\!\ 2b\text{-}\!1\!)$};
        \node at (88,109){$\scriptstyle(2b\text{-}\!1\!\ 2b)$};
        \end{tikzpicture}
    \caption{A link $L$ in fishnet position with $b$ bridges, colored black, and an unknot $U \subset S^3\backslash L$, colored blue, embedded in its exterior. The resulting link $L\cup U$ surjects onto the symmetric group $S_{2b},$ mapping meridians to transpositions, as shown. Thus, $\beta(L\cup U) = \mu(L \cup U)= 2b-1$.     }
    \label{fig:almost-mrc}
\end{figure}

\section{Distance} \label{sec:distance}

Suppose $M$ is a compact, orientable 3-manifold containing a properly embedded, compact, 1-manifold $\tau$. In what follows, we consider properly embedded, compact surfaces $F \subset M$ which are transverse to $\tau$. The points $\tau \cap F$ are the {\it punctures} on $F$. Two properly embedded punctured surfaces in $M$ are {\it equivalent} if they are properly isotopic via an isotopy that at every stage produces a surface transverse to $\tau$. We say two equivalent surfaces are transversely isotopic with respect to $\tau$ (or simply that they are isotopic, if both transversality and $\tau$ are understood from context). A curve $\gamma \subset F$ will be called {\it essential} if $\gamma$ is disjoint from $\tau$, is not boundary parallel, and doesn't bound a disk in $F$ with fewer than two punctures. The surface $F$ is {\it incompressible} in $M$ if there is no disk $D$ (called a {\it compressing disk}) in $M\backslash\tau$ %disjoint from $\tau$, 
such that $D \cap F=\partial D$ is an essential curve in $F$. If $F$ is a sphere, it is called an {\it inessential sphere} if it bounds a ball disjoint from $\tau$ or bounds a ball containing a single, boundary parallel subarc of $\tau$. A connected incompressible surface $F$ will be called {\it essential} if it is not an inessential sphere and if there is no parallelism transverse to $\tau$ %\textcolor{red}{What does "relative to $\tau$ mean"?}\textcolor{blue}{I changed the word to transverse.} \textcolor{red}{Cool.} 
between $F$ and some collection of components of $\partial M$. If $F$ is a closed, connected, punctured separating surface in $M$, then $F$ is {\it bicompressible} if there exists a compressing disk for $F$ contained to each of its sides.

 A {\it tangle} $(B, \tau)$ consists of a 3-ball $B$ and a properly embedded finite collection of arcs, $\tau$. A tangle $(B, \tau)$ is {\it irreducible} if every unpunctured 2-sphere in $(B, \tau)$ is inessential. The tangle $(B, \tau)$ is {\it trivial} if there is a collection of disjoint compressing disks for $\partial B$ contained in the complement of $\tau$ so that surgering $B$ along this collection results in components that are 3-balls, each containing a single boundary parallel arc.

Recall that an \emph{$n$-bridge sphere}, $\Sigma$, for a link $L$ in $S^3$ is a sphere meeting $L$ transversely in $2n$ points and dividing $(S^3,L)$ into two trivial tangles. 
The bridge spheres considered are not necessarily minimal. Note that when $\beta(L)=n,$ there is an $n$-bridge sphere $\Sigma$ for a representative of $L$, and we say that $\Sigma$ realizes the bridge number of $L$.

\subsection{Distance} Distance is a complexity measure for bridge spheres, obtained from the distance between disk sets in the curve complex.  Let $L \subset S^3$ be a link and let $\Sigma$ be any bridge sphere for $L$ separating $S^3$ into balls $B_1$ and $B_2$. Define the {\it curve complex} of $\Sigma$, denoted $\mathcal{C}(\Sigma)$, to be the graph whose vertices are isotopy classes of essential simple closed curves in $\Sigma$. Two vertices are connected by an edge if their corresponding curves can be isotoped in $\Sigma\backslash L$ to be disjoint. We endow the vertex set of the curve complex $\mathcal{C}(\Sigma)$ with a metric in the natural way: assign length one to each edge and let the distance between two vertices be the length of the shortest path between them.
Let the {\it disk set}, $\mathcal{D}_i \subset \mathcal{C}(\Sigma)$, be the set of vertices in $\mathcal{C}(\Sigma)$ which correspond to essential curves in $\Sigma$ that bound compressing disks in $B_i$, $i = 1,2$. We define the {\it distance of $\Sigma$}, denoted $d(\Sigma)$, to be
\[ d(\Sigma) = \min\{ d(c_1,c_2): c_i \in \mathcal{D}_i\}.\] 

If $\Sigma$ is punctured four or fewer times (i.e., when the curve complex is empty or disconnected), we define the distance of $\Sigma$ to be infinite. The {\it distance of $L$}, $d(L)$, is the maximum possible distance $d(\Sigma)$ of any bridge sphere $\Sigma$ for a representative of $L$ such that $\Sigma$ realizes the bridge number of $L$. It follows from \cite{To07} that as long as $\beta(L) \geq 3$, this maximum is a well-defined positive integer.

Given a bridge sphere $\Sigma$ for a link $L$, several authors have given upper bounds on the distance of $\Sigma$ in terms of the topology of certain essential \cite{BS05,BCJTT17} or bi-compressible \cite{To07,BCJTT17} surfaces in the link exterior. For simplicity, we have restricted ourselves to considering only knots and using only three such results when proving Theorem \ref{thm:low-d}. We remark that some of the bounds on distance in Theorem \ref{thm:low-d} can be improved by employing lengthier arguments or alternative upper bounds on distance. 

First, we need an upper bound on the distance of a knot in terms of the genus and number of boundary components of an essential (possibly meridional) surface in the knot exterior. The following theorem is a restatement of Theorem 5.7 of ~\cite{To07} for knots in $S^3$. This is very similar to the main theorem of \cite{BS05}, which uses a slightly different definition of distance. In the following theorem, $E(L)$ denotes the exterior of a link $L$ in $S^3$.

\begin{theorem}\label{MaggyThm}\cite{To07, BS05}
Suppose that $L$ is a non-trivial knot in $S^3$. Let $\Sigma$ be a bridge sphere for $L$. Suppose that $\overline{S} \subset S^3$ is a surface transverse to $L$ such that $S = \overline{S} \cap E(L)$ is not a sphere and is essential in $E(L)$. Then
\[
d(\Sigma) \leq \max\{3, 2g(S) + |\overline{S} \cap L|\}.\] 
 If, in addition, $\overline{S} = S$ and if $S$ is a torus, then $d(\Sigma) \leq 2$.
\end{theorem}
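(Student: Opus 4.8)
Since the statement is a specialization to knots in $S^3$ of Theorem~5.7 of~\cite{To07} and is close to the main theorem of~\cite{BS05}, the plan is to reprove it by the sweepout (thin position) method underlying both references rather than to quote a black box. The governing principle is Hartshorn's: a sufficiently complicated essential surface forces the distance of any splitting surface to be small. Concretely, I would realize $d(\Sigma)$ as a distance in the curve complex $\mathcal{C}(\Sigma)$ between the two disk sets $\mathcal{D}_1,\mathcal{D}_2$, and manufacture from the essential surface $S$ an explicit path in $\mathcal{C}(\Sigma)$ joining $\mathcal{D}_1$ to $\mathcal{D}_2$ whose length is bounded above by $2g(S)+|\overline{S}\cap L|$.

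First I would set up a sweepout of $S^3$ by spheres isotopic to $\Sigma$: a map $H\colon \Sigma\times(-1,1)\to S^3$ whose level spheres $\Sigma_t$ are all isotopic to $\Sigma$ in the complement of $L$ and which degenerates at $t=\pm 1$ onto spines $K_\pm$ of the two trivial tangles (each spine being a graph onto which the corresponding punctured ball deformation retracts, carrying the bridge arcs). Next I would isotope $S$, fixing the punctures $\overline{S}\cap L$, so that the height function $h=\mathrm{pr}_{(-1,1)}\circ H^{-1}|_S$ is Morse with distinct critical values. For each regular value $t$ the intersection $S\cap\Sigma_t$ is then a finite family of simple closed curves, and these families change only by births, deaths, and saddle moves as $t$ varies across critical values.

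The heart of the argument is a dichotomy at the regular levels. If for some $t$ a component of $S\cap\Sigma_t$ is essential in the punctured sphere $\Sigma_t$ and $S$ compresses to both sides along disks whose boundaries are disjoint from that curve, then $d(\Sigma)\le 2$ and we are done. Otherwise, I would use essentiality of $S$ (it is not a sphere, is incompressible, and is not boundary-parallel) to force at least one essential intersection curve at every level strictly between the two ends, and to show that passing a single saddle alters the isotopy class of a tracked essential curve by at most one edge of $\mathcal{C}(\Sigma)$; near the spine ends these curves are isotopic to boundaries of compressing disks of $B_1$, respectively $B_2$, hence lie in $\mathcal{D}_1$, respectively $\mathcal{D}_2$. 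Concatenating across all saddles yields the desired path. Since the number of saddles of a Morse function on the punctured surface $S$ is governed by $-\chi(S)=2g(S)-2+|\overline{S}\cap L|$, careful bookkeeping gives $d(\Sigma)\le 2g(S)+|\overline{S}\cap L|$, and the $\max\{3,\cdot\}$ absorbs the degenerate low-complexity cases, in particular the need for $\mathcal{C}(\Sigma)$ to be connected, which requires at least four punctures (equivalently $\beta(L)\ge 3$).

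The main obstacle is the extraction and bookkeeping of the curve-complex path: one must verify that the disks produced are genuine compressing disks for $\Sigma_t$ in the complement of $L$ (not bounding once-punctured or unpunctured disks on $\Sigma_t$), that a chosen essential curve can be tracked through the saddles without the edge-count inflating, and that incompressibility and $\partial$-essentiality of $S$ genuinely preclude sweeping $S$ entirely into one tangle. For the torus addendum, substituting $g(S)=1$ and $|\overline{S}\cap L|=0$ into $-\chi(S)=2g(S)-2+|\overline{S}\cap L|$ gives $0$, so the same count produces a path of length at most $2$; the remaining point is to remove the floor of $3$ imposed by the $\max$. Here I would argue directly that a closed essential torus in $E(L)$ meets the central level of the sweepout in a configuration yielding two compressing curves at distance at most $2$, so a bridge sphere of distance $3$ cannot coexist with such a torus, establishing $d(\Sigma)\le 2$.
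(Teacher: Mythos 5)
You have correctly identified that the paper itself offers no proof of Theorem~\ref{MaggyThm}: it is quoted verbatim (up to specialization to knots in $S^3$) from Theorem~5.7 of \cite{To07}, with \cite{BS05} as the closely related antecedent. Your sweepout outline does match the strategy of those references, so the approach is the right one. However, as a proof your proposal has genuine gaps exactly at the load-bearing steps, which is where the actual content of \cite{BS05} and \cite{To07} lives. First, the claim that ``passing a single saddle alters the isotopy class of a tracked essential curve by at most one edge of $\mathcal{C}(\Sigma)$'' fails as stated: a saddle can consume the tracked curve and produce only curves that are inessential in the punctured level sphere (bounding unpunctured or once-punctured disks), and there may be long intervals of levels at which \emph{no} component of $S\cap\Sigma_t$ is essential, leaving you with no vertex of $\mathcal{C}(\Sigma)$ to stand on. Repairing this is precisely the point of Bachman--Schleimer's labeling of levels by whether curves lie ``mostly above'' or ``mostly below,'' and of Tomova's two-parameter sweepout and Rubinstein--Scharlemann graphic; neither mechanism is reconstructed in your sketch. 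Second, your Euler characteristic count is off: a Morse function on $S$ has $\#\mathrm{saddles}=-\chi(S)+\#\mathrm{extrema}$, and nothing a priori bounds the extrema; even in minimal position the relevant count is $2-\chi(S)=2g(S)+|\overline{S}\cap L|$, not $-\chi(S)$, and in \cite{BS05} this bound emerges from a $\chi$-count over the subsurfaces between consecutive essential levels rather than from a saddle-by-saddle edge count. ``Careful bookkeeping'' is standing in for the theorem's actual proof.

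Two further points. The torus addendum is not a substitution into the same count: with $S$ closed of genus one, $2g(S)+|\overline{S}\cap L|=2$, but the general statement only yields $d(\Sigma)\leq\max\{3,2\}=3$, so the entire content of the addendum is shaving $3$ down to $2$. Tomova proves this by a separate argument, and your ``argue directly that the torus meets the central level in a configuration yielding two compressing curves at distance at most $2$'' is an assertion of the conclusion, not an argument---note also that a low-genus closed surface is exactly the situation in which the mostly-above/mostly-below machinery is most delicate. Finally, a smaller slip: in your dichotomy the bicompressibility condition should be on the level sphere $\Sigma_t$ (compressing disks in $B_1$ and $B_2$ whose boundaries avoid a common essential curve of $S\cap\Sigma_t$), not on $S$; compressing disks for $S$ would contradict the incompressibility you are assuming. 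None of these objections says your route is wrong---it is the route of the cited papers---but the proposal as written defers all of their hard lemmas, so it does not constitute an independent proof of the statement.
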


Next, we need an upper bound on the distance of a knot in terms of the genus and number of boundary components of an essential surface with non-meridional boundary components. The following theorem is a specialization of  Theorem 5.2 of \cite{BCJTT17} to knots in $S^3$.

\begin{theorem}\label{NonMeridionalBounds} \cite{BCJTT17}
Let $K$ be a nontrivial knot in $S^3$ with $\beta(K)\geq 3$.  Suppose that $F$ is an orientable essential surface properly embedded in the exterior of $K$ with non-empty, non-meridional boundary. Then

$$\beta(K)(d(K)-4)
\leq \frac{8g(F)-8}{|\partial F|}+4.$$
\end{theorem}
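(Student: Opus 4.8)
The plan is to deduce the stated inequality from the general bound of~\cite[Theorem~5.2]{BCJTT17}, which is formulated for links, by specializing to a single-component knot. It is convenient to first recast the target inequality in terms of the Euler characteristic of $F$, both to clarify its structure and to reveal exactly which constants must be matched. Writing $b=|\partial F|$ and $g=g(F)$, a genus-$g$ surface with $b$ boundary circles has $\chi(F)=2-2g-b$, so that $8g-8=-4\chi(F)-4b$ and hence
\[
\frac{8g-8}{b}+4=\frac{-4\chi(F)-4b}{b}+4=\frac{-4\chi(F)}{b}.
\]
Thus the assertion to be proved is equivalent to $\beta(K)\,|\partial F|\,\bigl(d(K)-4\bigr)\leq -4\chi(F)$, a clean form in which the role of the boundary count and of the additive constant $4$ is transparent.

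Next I would invoke the general theorem. Theorem~5.2 of~\cite{BCJTT17} takes as input a link $L$, a bridge sphere $\Sigma$ separating $(S^3,L)$ into trivial tangles, and an orientable essential surface $F$ properly embedded in $E(L)$ with non-empty, non-meridional boundary; its conclusion bounds $d(\Sigma)$ above by an expression of exactly the Euler-characteristic shape produced in the previous paragraph, with the number of bridges appearing as a multiplicative factor and the additive constant arising from the bounded curve-complex contribution of the product (trivial-tangle) regions. The mechanism behind that theorem is the standard sweep-out argument: one isotopes $F$ into thin position against a sweep-out of $S^3$ by copies of $\Sigma$, studies the associated graphic, and assembles from the curves of $F\cap\Sigma$ a path in $\mathcal{C}(\Sigma)$ joining the disk sets $\mathcal{D}_1$ and $\mathcal{D}_2$, whose length is controlled by $-\chi(F)$ once apportioned among the $\beta$ bridges and the $|\partial F|$ boundary components.

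To specialize, I set $|L|=1$, so that $L=K$ and the number of bridges is $\beta(K)$; choosing $\Sigma$ among the bridge-number-realizing spheres so that $d(\Sigma)=d(K)$, the general inequality becomes precisely $\beta(K)\,|\partial F|\,\bigl(d(K)-4\bigr)\leq -4\chi(F)$. Here the hypotheses that $K$ is nontrivial and $\beta(K)\geq 3$ guarantee, as noted after the definition of distance, that $d(K)$ is a well-defined positive integer realized by some such sphere, so this choice is legitimate. Reversing the algebra of the first paragraph then returns the form stated in the theorem.

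The main obstacle is the faithful transfer of constants and hypotheses from the link setting to the knot setting: one must check that the multiplicative factor in~\cite[Theorem~5.2]{BCJTT17} specializes to $\beta(K)$ rather than to some other component- or puncture-dependent count, and that the additive $-4$ inside the factor and the divisor $|\partial F|$ are exactly those appearing after specialization. Verifying that it is the non-meridional boundary hypothesis which licenses the division by $|\partial F|$ is the crux, since meridional boundary would instead feed into the puncture count and produce the qualitatively different estimate of Theorem~\ref{MaggyThm}. Were one to seek a self-contained argument rather than a specialization, the genuinely hard part would be the sweep-out and graphic analysis underlying the general theorem, namely bounding the length of the resulting path in $\mathcal{C}(\Sigma)$ by the Euler-characteristic data of $F$.
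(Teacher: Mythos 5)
Your proposal is correct and takes essentially the same approach as the paper: the paper gives no independent proof of Theorem~\ref{NonMeridionalBounds}, presenting it exactly as you do, namely as a direct specialization of~\cite[Theorem~5.2]{BCJTT17} to a knot in $S^3$, with the bridge-number-realizing sphere attaining $d(K)$ (legitimate by~\cite{To07} since $\beta(K)\geq 3$). Your algebraic reformulation $\frac{8g(F)-8}{|\partial F|}+4=\frac{-4\chi(F)}{|\partial F|}$ is correct and faithfully captures the Euler-characteristic form in which the cited theorem is stated.
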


Finally, we need an upper bound on the distance of a knot in terms of the Heegaard genus of a manifold obtained by Dehn surgery. The following theorem is a specialization of Theorem 8.2 in \cite{BCJTT17} to knots in $S^3$.

\begin{theorem}\label{Bounding dist BJC}\cite{BCJTT17} Let $K$ be a non-trivial knot in $S^3$. Let $M'$ be the result of non-trivial Dehn surgery on $K$ and let $g$ be the Heegaard genus of $M'$.  Then
\[ d(K) \leq \max\Big\{\frac{2}{\beta(K)} + 4, \frac{4g}{\beta(K)} + 4, 2g+ 2\Big\}.\] 
\end{theorem}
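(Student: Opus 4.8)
The plan is to deduce the statement, as in Theorem~8.2 of~\cite{BCJTT17}, by converting a minimal-genus Heegaard splitting of the surgered manifold $M'$ into an essential surface in the exterior $E(K):=S^3\setminus\mathrm{int}(N(K))$ and then invoking the two distance bounds already recorded above. Write $M'=E(K)\cup_{\partial}T$, where $T$ is the surgery solid torus with core $c\subset T$ (the dual knot), and fix a genus-$g$ Heegaard splitting $M'=H_1\cup_S H_2$. The goal of the construction is to replace the low-genus surface $S$ by an essential surface in $E(K)$ whose genus and number of boundary components are controlled by $g$, after which Theorems~\ref{MaggyThm} and~\ref{NonMeridionalBounds} finish the argument.

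First I would put $S$ into a normal form with respect to $T$: position $c$ in minimal bridge position relative to $S$ (viewing $c$ as a knot in $M'$) so that $S\cap T$ is a controlled collection of meridian disks of $T$ and $F_0:=S\cap E(K)$ is a properly embedded surface whose boundary lies on $\partial E(K)=\partial T$ along a single slope, namely the surgery slope $\gamma$. Because the surgery is non-trivial, $\gamma$ is not the meridian of $K$, so $\partial F_0$ is non-meridional; and since $S$ has genus $g$, we get $g(F_0)\leq g$, while the normal-form positioning bounds $|\partial F_0|$. I would then compress $F_0$ maximally inside $E(K)$ and discard sphere and boundary-parallel components, arriving at an essential surface $F$ with $g(F)\leq g$, the complexity bounds surviving since compression only decreases genus and boundary.

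The remaining step is a case split according to the type of $F$. If $F$ is closed, or can be capped off to a meridional surface meeting $K$ in at most two points, then Theorem~\ref{MaggyThm} yields $d(K)\leq\max\{3,\,2g(F)+|\overline F\cap K|\}\leq 2g+2$. If instead $F$ retains non-meridional boundary, Theorem~\ref{NonMeridionalBounds} gives $d(K)\leq\frac{1}{\beta(K)}\!\left(\frac{8g(F)-8}{|\partial F|}+4\right)+4$; evaluating with $(g(F),|\partial F|)=(g,2)$ produces the term $\frac{4g}{\beta(K)}+4$, while the planar case $(g(F),|\partial F|)=(0,\geq 4)$ produces $\frac{2}{\beta(K)}+4$. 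Taking the maximum over these three outcomes is exactly the bound asserted in Theorem~\ref{Bounding dist BJC}.

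The main obstacle is the surface-extraction step: one must guarantee that an essential $F$ survives with both $g(F)\leq g$ and a small, explicit number of boundary components, and separately handle the degenerate possibilities in which $F_0$ compresses away entirely or every essential piece is boundary-parallel. This is precisely where the sweepout and thin-position analysis of~\cite{BCJTT17} is required, and where one must verify that the distance $d(\Sigma)$ of a bridge sphere $\Sigma$ realizing $\beta(K)$ is genuinely obstructed by the extracted surface rather than lost under the surgery. Everything downstream of that point is bookkeeping with the two quoted inequalities.
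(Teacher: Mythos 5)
The first thing to note is that the paper does not prove Theorem~\ref{Bounding dist BJC} at all: it is imported as a specialization of Theorem~8.2 of \cite{BCJTT17} to knots in $S^3$, so there is no internal argument to compare against, and the right benchmark is the sweepout analysis in \cite{BCJTT17} itself. Your outline is directionally consistent with that analysis, and the arithmetic by which you recover the terms $\frac{4g}{\beta(K)}+4$ and $\frac{2}{\beta(K)}+4$ from Theorem~\ref{NonMeridionalBounds} is the right bookkeeping. But the step you defer (``the surface-extraction step'') is not a technicality to be outsourced; it is the content of the theorem, and two specific points in your sketch fail as written.

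First, the boundary count. Nothing in your normal form bounds $|S\cap T|$: minimal bridge position of the dual core $c$ with respect to a sweepout by $S$ can have arbitrarily many bridges, so $|\partial F_0|$ is a priori unbounded, and maximal compression gives no control of $|\partial F|$ either --- your evaluations $(g(F),|\partial F|)=(g,2)$ and $(0,\geq 4)$ are assumptions, not conclusions. The failure is quantitative: Theorem~\ref{NonMeridionalBounds} applied with $|\partial F|=1$ (which can genuinely occur --- for $0$-surgery the slope $\gamma$ is the longitude, and an essential surface with a single $\gamma$-sloped boundary component, e.g.\ a Seifert surface, is possible) yields only $d(K)\leq \frac{8g-4}{\beta(K)}+4$, strictly weaker than the claimed $\frac{4g}{\beta(K)}+4$. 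Second, and more fundamentally, the degenerate case in which $F_0$ compresses away entirely (e.g.\ when $E(K)$ is small and $\gamma$ is not a boundary slope) cannot be handled by either quoted theorem, since both require an \emph{essential} surface. This is precisely where the $2g+2$ term comes from in \cite{BCJTT17}: one retains the punctured Heegaard surface $Q=S\cap E(K)$ itself, which is bicompressible, and applies the distance bound for bicompressible surfaces (\cite{To07}, and its refinement in \cite{BCJTT17}, alluded to in Section~\ref{sec:distance} but not stated in this paper); for a twice-punctured genus-$g$ surface this gives $d(\Sigma)\leq 2-\chi(Q)=2g+2$. Your attempt to extract $2g+2$ from Theorem~\ref{MaggyThm} instead does not work: $\partial F_0$ lies on the slope $\gamma$, which by non-triviality of the surgery is not the meridian $\mu$ of $K$, and a $\gamma$-sloped boundary cannot be capped off inside $N(K)$ (only $\mu$ bounds a disk there), so no meridional surface meeting $K$ in at most two points arises from this construction, and the closed essential case yields only $\max\{3,2g\}$. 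So the three terms in the maximum do not correspond to the three cases you propose; the missing idea is the bicompressible-surface bound, without which the argument cannot close.
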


\subsection{Distance and Fishnet Links}\label{DistanceandFish}

In \cite{johnson2016bridge}, Johnson and Moriah study the distance of strong fishnet links, described in the language of $2k$-plat projections. Given a diagram of a fishnet link as depicted in Figure \ref{Fig:JohnsonMoriah}, a horizontal line in the plane of projection that separates all maxima from all minima naturally corresponds to a bridge plane for the knot in $\mathbb{R}^3$, or, equivalently, a bridge sphere for the knot in $S^3$. We call any bridge sphere transversely isotopic to this the {\it induced bridge sphere} for a fishnet link. Denote by $\lceil x \rceil$ the ceiling function on $x$, equal to the smallest integer greater than or equal to $x$. Johnson and Moriah showed the following:

\begin{theorem}\label{thm:JohnsonMoriah}\cite{johnson2016bridge}
If $K \subset S^3$ is a strong fishnet link of width $m$ and height $n$, then $d(\Sigma) =  \lceil n /(m - 4)  \rceil$, where $\Sigma$ is the induced bridge sphere.  
\end{theorem}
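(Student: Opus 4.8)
The plan is to prove the equality $d(\Sigma)=\lceil n/(m-4)\rceil$ by establishing matching upper and lower bounds on the curve-complex distance between the two disk sets $\mathcal{D}_1,\mathcal{D}_2\subset\mathcal{C}(\Sigma)$ of the trivial tangles bounded by $\Sigma$. The organizing device is a family of horizontal \emph{level spheres} $\Sigma_0,\Sigma_1,\dots,\Sigma_n$, where $\Sigma_i$ is a 2-sphere meeting $K$ transversely in $m$ points obtained by cutting the fishnet diagram along a horizontal line just below row $i{+}1$; passing from $\Sigma_{i-1}$ to $\Sigma_i$ across row $i$ is recorded by the product of twists $\sigma_2^{t_{i,2}}\sigma_4^{t_{i,4}}\cdots$ (or its odd-indexed analogue). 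I would phrase the whole argument in terms of how the position of an essential curve or compressing disk changes as it is swept across these level spheres, since the strong-fishnet hypothesis $|t_{i,j}|\ge 3$ is precisely the assertion that every twist region acts like a high power of a Dehn twist.

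For the \emph{upper bound} $d(\Sigma)\le\lceil n/(m-4)\rceil$, I would exhibit an explicit path of that length in $\mathcal{C}(\Sigma)$ from $\mathcal{D}_1$ to $\mathcal{D}_2$. Beginning with a curve bounding a compressing disk in the bottom tangle, one isotopes it upward, keeping it essential and disjoint from $K$, so that it ``straddles'' several consecutive rows; the key point is that a single curve can be made disjoint from a second curve pushed up by as many as $m-4$ rows, because the two outermost strands at each end of the fishnet leave enough room in the $m$-punctured sphere to route successive curves disjointly. Iterating, each edge of the path advances the frontier by $m-4$ rows, so after $\lceil n/(m-4)\rceil$ edges one arrives at a curve bounding a disk in the top tangle. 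Checking disjointness at each step and that the intermediate curves remain essential is routine bookkeeping on the diagram; alternatively one could try to produce the bound from a horizontal essential surface via Theorem~\ref{MaggyThm}, though matching the exact value seems to require the direct construction.

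The heart of the matter, and the main obstacle, is the \emph{lower bound}. Here I would prove a \emph{one-step estimate}: if $\gamma$ and $\gamma'$ are disjoint essential curves on $\Sigma$, then the vertical range of rows over which both can be simultaneously nontrivial is at most $m-4$. The mechanism is that a twist region with $|t_{i,j}|\ge 3$ forces any curve that is active across it to meet the associated vertical arcs a definite number of times, so two disjoint curves cannot remain nontrivial across more than $m-4$ rows of such twisting. I would make this precise using subsurface projections to the twice-punctured-disk subsurfaces supporting each twist region, invoking a large-twisting/bounded-geodesic-image estimate (the machinery of \cite{johnson2016bridge}) to show that each row contributes a definite amount to a projection coordinate. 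Summing these contributions along a geodesic in $\mathcal{C}(\Sigma)$, any path from $\mathcal{D}_1$ to $\mathcal{D}_2$ must traverse all $n$ rows while advancing at most $m-4$ rows per edge, yielding $d(\Sigma)\ge\lceil n/(m-4)\rceil$. The delicate points are (i) controlling the interaction between the odd- and even-indexed rows, whose twist regions are horizontally offset, and (ii) ruling out ``shortcut'' curves that are complicated with respect to $K$ yet cheap in $\mathcal{C}(\Sigma)$ --- exactly where the hypotheses $|t_{i,j}|\ge 3$ and $m\ge 6$ are indispensable, the latter because for $m=4$ the curve complex degenerates and the distance is declared infinite.
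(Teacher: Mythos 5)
You should first be aware that the paper contains no proof of this statement: Theorem~\ref{thm:JohnsonMoriah} is imported verbatim from Johnson and Moriah \cite{johnson2016bridge}, so the only fair benchmark for your attempt is their original argument. Measured against that, your upper bound sketch is essentially on target: the equality is proved there by exhibiting an explicit path in $\mathcal{C}(\Sigma)$ whose consecutive curves are disjoint and whose frontier advances $m-4=2(k-2)$ rows per edge, exactly as you describe, and the verification that intermediate curves are essential and pairwise disjoint is indeed diagram-level bookkeeping. Your closing remarks (that $m\geq 6$ is needed, since for $m=4$ the distance is declared infinite) are also consistent with the conventions in Section~\ref{sec:distance}.

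The genuine gap is in the lower bound, which is the entire content of the theorem. Two problems. First, circularity: you invoke ``the machinery of \cite{johnson2016bridge}'' --- that is, the very paper whose theorem you are proving --- so as a blind proof the lower bound reduces to a citation of its own source. Second, and more substantively, the tool you name cannot deliver the result even in principle: bounded-geodesic-image and subsurface-projection arguments produce \emph{coarse} inequalities with unspecified additive and multiplicative constants. Such estimates can certify that $d(\Sigma)\to\infty$ as $n\to\infty$ with twisting held high, but they cannot produce the sharp per-edge constant $m-4$, and hence cannot yield the exact equality $d(\Sigma)=\lceil n/(m-4)\rceil$. Johnson and Moriah obtain the exact count precisely by avoiding these constants: they build a nested sequence of train tracks adapted to blocks of rows and prove a precise confinement lemma --- any essential curve disjoint from a curve carried by the $i$th track is itself carried by the neighboring track --- from which the one-step estimate follows with the exact constant. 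Your proposed one-step estimate (``the vertical range of rows over which two disjoint curves can be simultaneously nontrivial is at most $m-4$'') is not even well defined as stated: ``nontrivial over a range of rows'' needs a notion of normal form or carriedness relative to the level spheres before it can be proved or summed along a geodesic. Supplying that definition and proving the confinement lemma is where all the work lies, and the proposal offers no argument for it.
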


\subsection{Distance and classes of links known to satisfy the MRC}\label{DistOfOtherMRCClasses}

In this section we show that knots for which the MRC has previously been established have either small bridge number or the property that the ratio of their bridge distance over their bridge number is less than 3. We have restricted ourselves to knots here. That said, the tools used would apply to links in all cases except generalized Montesinos links~\cite{LM93}, for which our proof would require a version of Theorem \ref{Bounding dist BJC} for links. That said, we believe that Theorem \ref{thm:low-d} holds for links as well.

\begin{theorem} \label{thm:low-d}
    Let $K$ be a knot shown to satisfy the MRC in one of ~\cite{boileau1985nombre, rost1987meridional, boileau1989orbifold, LM93, CH14, Corn14, boileau2017meridionalrank,
 baader2017symmetric, baader2019coxeter, baader2023bridge, Dutra22}. Then either $\beta(K)\leq 5$ or $d(K)\leq max(8, 2\beta(K)+2)$.
\end{theorem}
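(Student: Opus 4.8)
The plan is to argue reference by reference, partitioning the cited families according to which of the three distance bounds (Theorems~\ref{MaggyThm}, \ref{NonMeridionalBounds}, \ref{Bounding dist BJC}) controls them, and disposing of the remaining low-complexity knots by hand. The guiding principle is that a knot of large distance can contain no essential surface of small genus with few punctures, and admits no nontrivial Dehn surgery to a manifold of small Heegaard genus; conversely, every family on the list is geometrically simple enough to provide one of these obstructions. Concretely, I would first set aside all knots with $\beta(K)\le 5$, for which nothing needs to be proved, and then treat the remaining knots, which have $\beta(K)\ge 6$ and in particular $\beta(K)\ge 3$, so that Theorems~\ref{NonMeridionalBounds} and~\ref{Bounding dist BJC} are available.

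The first block consists of the torus links of~\cite{rost1987meridional} together with the satellite and iterated constructions occurring among the Montesinos and arborescent references. A nontrivial torus knot has Seifert-fibered exterior containing an essential cabling annulus $A$, with $g(A)=0$, $|\partial A|=2$, and non-meridional boundary slope; Theorem~\ref{NonMeridionalBounds} then gives $\beta(K)\bigl(d(K)-4\bigr)\le \tfrac{-8}{2}+4=0$, whence $d(K)\le 4$. Any knot containing a closed essential torus $S$ (cables, connected sums, more general satellites) falls under the torus case of Theorem~\ref{MaggyThm}, giving $d(K)\le 2$. The second block is the Montesinos and arborescent families of~\cite{boileau1985nombre, boileau1989orbifold, CH14, Corn14, boileau2017meridionalrank, baader2017symmetric, baader2019coxeter, baader2023bridge, Dutra22}: once such a knot is complicated enough to have $\beta(K)\ge 6$, its standard tangle decomposition exhibits an essential Conway sphere $S$, a four-punctured sphere with $g(S)=0$ and $|\overline{S}\cap K|=4$, so Theorem~\ref{MaggyThm} yields $d(K)\le\max\{3,\,0+4\}=4$. (Members of these families carrying instead an essential surface with higher genus and non-meridional boundary are handled directly by Theorem~\ref{NonMeridionalBounds}, which still gives a bound within $\max(8,2\beta(K)+2)$.) In every case of these two blocks the resulting bound is at most $8$.

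The remaining, and genuinely harder, case is the generalized Montesinos links of~\cite{LM93}, which need not admit an essential Conway sphere nor a small-genus essential surface with non-meridional boundary. Here I would instead use the Dehn-surgery bound, Theorem~\ref{Bounding dist BJC}. The orbifold/Seifert structure underlying a generalized Montesinos knot supplies a nontrivial surgery whose result $M'$ is a Seifert fibered space (or a connected sum of such) of controlled Heegaard genus; combined with the elementary estimate $g(M')\le g(E(K))\le\beta(K)$ recorded earlier in the paper, Theorem~\ref{Bounding dist BJC} gives
\[
d(K)\le\max\Big\{\tfrac{2}{\beta(K)}+4,\ \tfrac{4g}{\beta(K)}+4,\ 2g+2\Big\}\le\max\{8,\,2\beta(K)+2\},
\]
which is exactly the asserted bound, and is also the source of the linear term $2\beta(K)+2$. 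This is the only place where the restriction to knots is forced: Theorem~\ref{Bounding dist BJC} is stated for knots in $S^3$ and no link analogue is available, so generalized Montesinos \emph{links} must be excluded from any corresponding statement for links.

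I expect the main obstacle to be precisely this last case: one must verify that every knot covered by~\cite{LM93} admits a nontrivial surgery whose result has Heegaard genus small enough for Theorem~\ref{Bounding dist BJC} to be informative, and identify that surgery from the orbifold data rather than from a generic filling. A secondary, bookkeeping obstacle is matching each cited paper to the correct mechanism and checking \emph{essentiality} of the relevant surface in the precise sense required by Theorems~\ref{MaggyThm} and~\ref{NonMeridionalBounds} — disjointness from $K$, non-boundary-parallelism, and, for Theorem~\ref{NonMeridionalBounds}, non-meridional boundary — since a finite set of degenerate members of each family will carry no such surface and must be returned to the $\beta(K)\le 5$ bin.
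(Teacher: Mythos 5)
Your proposal reproduces the paper's treatment of the Montesinos/arborescent block (essential meridional $4$-punctured sphere plus Theorem~\ref{MaggyThm}), the torus/satellite block (cabling annulus or essential torus plus Theorems~\ref{NonMeridionalBounds} and~\ref{MaggyThm}), and the generalized Montesinos case of~\cite{LM93} (a non-trivial surgery plus Theorem~\ref{Bounding dist BJC}). But it has a genuine gap: it omits the \emph{twisted knots} of~\cite{baader2019coxeter}, which is where the paper spends most of its effort. That paper establishes the MRC for two distinct classes, and the twisted knots --- defined by a twisted checkerboard surface $F$ whose retract is an arbitrary planar graph with all weights at least $2$ in absolute value and no vertices of valence one or two --- are in general not arborescent, can have arbitrarily large bridge number, and need not contain any essential Conway sphere, annulus, or torus. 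So your claim that every member of this block with $\beta(K)\geq 6$ exhibits an essential Conway sphere fails precisely on this class, and your parenthetical fallback (``handled directly by Theorem~\ref{NonMeridionalBounds}'') is vacuous without a bound on the genus of the essential surface in terms of $\beta(K)$: Theorem~\ref{NonMeridionalBounds} gives a bound of the required shape only once one knows $g$ grows at most linearly in $\beta(K)$. The missing idea is the paper's Euler characteristic computation $\chi(F)=2-\beta(K)$ (Equation~\ref{eq:chi-beta}), obtained from $v^\ast=\beta(K)$ and Euler's formula for the dual graph, followed by a two-case argument: if $F$ is orientable, maximally compress to an essential Seifert surface and conclude $d(K)\leq \beta(K)+2$; if $F$ is non-orientable, pass to $\partial H_F$ minus an annulus neighborhood of $K$, a two-boundary-component orientable surface of genus $-\chi(F)$, compress, and conclude $d(K)\leq \tfrac{4}{3}\beta(K)+\tfrac{2}{3}$. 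None of this is recoverable from your outline.

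A secondary but instructive problem is your ``elementary estimate'' $g(M')\leq g(E(K))\leq\beta(K)$ in the~\cite{LM93} case. Since every knot admits a non-trivial surgery, that estimate combined with Theorem~\ref{Bounding dist BJC} as you use it would yield $d(K)\leq\max(8,2\beta(K)+2)$ for \emph{every} non-trivial knot, making the entire case analysis superfluous --- and contradicting Corollary~\ref{cor:knots-too}, which produces knots with $d/\beta$ arbitrarily large. So that shortcut proves too much and cannot be sound as a substitute for the paper's argument, which instead invokes Theorem 0.1 of~\cite{LM93} directly: the specific generalized Montesinos knots treated there admit a non-trivial surgery whose resulting manifold has Heegaard genus \emph{equal} to $\beta(K)$, and it is that statement which is fed into Theorem~\ref{Bounding dist BJC} to produce the $\max(8,2\beta(K)+2)$ term. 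Your instinct that this case is the delicate one, and that it forces the restriction to knots, is correct; but the verification you flag as the ``main obstacle'' is exactly what the citation to~\cite{LM93} supplies, whereas the twisted-knot argument you left out must be built by hand.
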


\begin{proof}
    In ~\cite{boileau1985nombre}, ~\cite{baader2019coxeter}, and ~\cite{baader2023bridge}, the authors prove the MRC for classes of Montesinos and arborescent knots. Every knot in these classes has the property that it is either bridge number at most 3 or that its exterior contains an essential meridional 4-punctured sphere. In the latter case, the distance of the knot is less than or equal to 4 by Theorem \ref{MaggyThm}.

    In ~\cite{rost1987meridional}, ~\cite{boileau1989orbifold}, ~\cite{Corn14}, ~\cite{CH14}, ~\cite{boileau2017meridionalrank}, ~\cite{baader2017symmetric}, and ~\cite{Dutra22}, the authors prove the MRC for classes of $3$-bridge knots, torus knots or satellite knots. Every knot in these classes has one or more of the following properties: its bridge number is at most 3, its exterior contains an essential non-meridional annulus, or its exterior contains an essential torus. In each of the latter two cases, the distance of the knot is less than or equal to three by Theorems \ref{NonMeridionalBounds} and \ref{MaggyThm}.

    In ~\cite{LM93}, the authors prove the MRC for a class of knots known as ``generalized Motesinos knots''. As a consequence of Theorem 0.1 in that same paper, every generalized Motesinos knot $K$ which the authors show satisfies the MRC additionally admits a non-trivial Dehn surgery resulting in a 3-manifold $M(K)$ such that the Heegaard genus of $M(K)$ is equal to $\beta(K)$. However, by Theorem \ref{Bounding dist BJC}, this non-trivial Dehn surgery implies that the distance of $K$ is at most the maximum of $8$ and $2\beta(K)+2$.

    A second class of knots which the authors of ~\cite{baader2019coxeter} prove satisfy the MRC are twisted links. Here we will give a brief overview of the construction of these links (restricting our attention to knots).  For additional details, we refer the reader to ~\cite{brunner1992geometric}, where twisted links were introduced; and to~\cite{baader2019coxeter}, where they were shown to satisfy MRC. 
    
    Let $D$ be a diagram of a knot $L$, and let $F$ be one of the two surfaces with boundary $L$ obtained from a checkerboard coloring of the regions in the plane determined by $D$. We regard the surface $F$ as a union of disks and twisted bands. We assume $D$ is reduced in the sense that it does not contain any nugatory crossings and that the sign of crossings in each band of $F$ is constant.
    When every band of $F$ is maximal (i.e. no disk is incident to exactly two bands) and contains at least one full twist, we call $F$ a twisted surface. A diagram $D$ is twisted if it determines such a surface via a checkerboard coloring, and a knot is twisted if it admits a twisted diagram. For instance, the standard diagram of the $P(a_1, a_2, \dots, a_n)$ pretzel knot is twisted so long as each $a_i$ satisfies $|a_i|>1$.

    Retract the spanning surface $F$ to a planar graph $\Gamma$, where vertices of $\Gamma$ represent disks of $F$ and edges represent bands of $F$. The edges of $\Gamma$ are weighted with the number of half-twists in the corresponding band of $F$. Denote the dual weighted planar graph by $\Gamma^\ast$, where each edge of $\Gamma^\ast$ inherits the weight of the corresponding edge of $\Gamma$. The surface $F$ is twisted if and only if all weights of $\Gamma$ are at least 2 in absolute value and $\Gamma$ has no vertices of valence one or two; $L$ is then a twisted knot. In  ~\cite{baader2019coxeter}, the authors prove the MRC for twisted knots, and they show that the bridge number of a twisted knot $L$ is equal to the number of vertices in $\Gamma^\ast$.

    Let $K$ be a twisted knot, let $v^*$ be the number of vertices in $\Gamma^\ast$, and let $v, e, f$ be the number of vertices, edges, and faces, respectively, of the planar embedding of $\Gamma$ induced by $F$. Then $f=v^*$ and $2=v-e+v^*$. Since $F$ retracts to $\Gamma$, then $\chi(F)=v-e$ and $2=\chi(F)+v^*$.  Since $v^*=\beta(K)$ by~\cite{baader2019coxeter}, we have that
    \begin{equation}\label{eq:chi-beta}
        \chi(F)= 2-\beta(K).
    \end{equation}

    If $F$ is orientable, then maximally compressing $F$ results in an essential Seifert surface, $F^c$, with genus at most the genus of $F$ and with one, non-meridional, boundary component. By Theorem \ref{NonMeridionalBounds}, $\beta(K)\leq 5$, or $d(K)\leq \frac{8g(F^c)-8}{5|\partial F^c|} + \frac{4}{5}+4\leq 2g(F)+\frac{16}{5}$. Since $d(K)$ is an integer, $d(K)\leq 2g(F)+3$. Since $g(F)=\frac{1}{2}(1-\chi(F))$, using Equation~\ref{eq:chi-beta} we conclude that %$\beta(K)\leq 5$ or 
    $d(K)\leq \beta(K)+2$. 

    If $F$ is non-orientable, we can use a similar construction. Let $H_F$ be a closed regular neighborhood of $F$ in $S^3$. So, $H_F$ is a handlebody of genus $1-\chi(\Gamma)=1-\chi(F)$. Since $K$ can be assumed to lie in the surface $\partial H_F$, we have that $\partial H_F$ meets a neighborhood of $K$ in exactly one annulus, denoted $A$. Hence $F':=\partial H_F \setminus int(A)$ is a compact, connected, orientable surface properly embedded in the exterior of $K$. Since $F'$ is obtained from $\partial H_F$ by deleting an annulus whose core is non-separating in $\partial H_F$, we obtain  $g(F')=g(\partial H_F)-1 =-\chi(F)$. Moreover, $F'$ has two boundary components. Note that $F'$ may be compressible. Maximally compressing $F'$ in the exterior of $L$ produces an essential surface $F''$ with genus at most  $-\chi(F)$ and with at most two boundary components. If $\beta(K)\leq 5$, then the theorem follows. So, we will assume $\beta(K)\geq 6$. By Theorem \ref{NonMeridionalBounds}, $\beta(K)(d(K)-4)
\leq \frac{8g(F'')-8}{|\partial F''|}+4.$ Since $\beta(K)\geq 6$, $d(K)\leq \frac{8g(F'')-8}{6|\partial F''|}+ \frac{14}{3}$.  Since $1\leq |\partial F''|\leq 2$, then $d(K)\leq \frac{8g(F'')-8}{6}+ \frac{14}{3}=\frac{4}{3}g(F'')+\frac{10}{3}.$ Since $g(F'')\leq -\chi(F)=\beta(K)-2$, then $d(K)\leq \frac{4}{3}\beta(K)+\frac{2}{3}$. 
\end{proof}

\section{Swimming with the Fishnets} \label{sec:limerick}

\noindent {\it Any link hanging down from a bridge}\\
\noindent {\it Can be a fishnet: jump in and --- sea! ---}\\
\noindent {\it Spin an icy unknot}\\
\noindent {\it Shivers get got}\\
\noindent {\it Swim in the unknot to catch MRC}\\

\section*{Acknowledgements}
\noindent
RB is partially supported by NSF grant DMS-2424734. AK and EP are partially supported by NSF grant DMS-2204349. We thank Michel Boileau for giving us feedback on a draft of this paper.

\bibliographystyle{alpha}
\bibliography{wirtinger}

\vspace{1cm}

Department of Mathematics and Statistics, California State University Long Beach\\
\noindent Long Beach, CA, 90840, USA\\

Department of Mathematics, University of Notre Dame\\
\noindent
Notre Dame, IN, 46556, USA\\

Department of Mathematics, University of Notre Dame\\
\noindent
Notre Dame, IN, 46556, USA\\
\end{document}